\documentclass{amsart}
\usepackage{amsfonts}
\usepackage{amssymb}
\usepackage{color}

\newtheorem{thm}{Theorem}[section]

\newtheorem{lem}[thm]{Lemma}

\newtheorem{prop}[thm]{Proposition}

\theoremstyle{definition}

\theoremstyle{remark}
\newtheorem{rem}{Remark}[section]

\numberwithin{equation}{section}

\begin{document}

\title[On a homotopy formula  for generalized steady Stokes' operators
]
{On a homotopy formula for generalized steady Stokes' operators, \\ associated with the de Rham complex}

\author{U. Kiseleva}
\email{ulita.kiseleva@gmail.com} 

\author{A.A. Shlapunov}
\email{ashlapunov@sfu-kras.ru}  
 

\address[Ulita Kiseleva, Alexander Shlapunov]{Siberian Federal University
                                                 \\
         pr. Svobodnyi 79
                                                 \\
         660041 Krasnoyarsk
                                                 \\
         Russia}


\subjclass {Primary 35A35; Secondary  35QXX, 35GXX}
\keywords{Stokes' type operators, approximation theorems, Fre\'chet topologies, de Rham complex}

\begin{abstract}
We construct left, right and bilateral fundamental solutions for 
generalized steady Stokes' operators $S$ with smooth coefficients coefficients, associated 
with the de Rham complex of differentials on differential forms over a domain $X$ in ${\mathbb R}^n$. 
The investigated operators are Douglis-Nirenberg elliptic under 
reasonable assumptions. As an immediate corollary we produce a homotopy formula for 
regular solutions to this operator.
\end{abstract}

\maketitle

\section*{Introduction}
\label{s.Int}

The crucial components providing many results for solutions to Partial Differential Equations are the following: 
a version of the so-called Unique 
Continuation Property for solutions, regularity theorems and existence of a bilateral regular 
fundamental solution/parametrix for  the investigated Differential Operator. Both 
Douglis-Nirenberg elliptic systems and elliptical parabolic systems reputedly have the mentioned above 
properties up to some extent. However, the construction of a fundamental solution, may be a rather difficult task, 
except some special cases. For instance, the Fourier and Laplace transform  give tools to 
do it for operators with constant coefficients, see \cite{Mal56}, \cite{Mal63}, \cite{Vla}.

	In the present paper we investigate 
generalized steady Stokes' operators $S$ with smooth coefficients coefficients, associated 
with the de Rham complex of differentials on differential forms over a domain $X$ 
in ${\mathbb R}^n$,  introduced in \cite{ShPMi}. 
The investigated operators are Douglis-Nirenberg elliptic under 
reasonable assumptions and have some properties similar to the classical Stokes operators, see 
\cite{Mas97}, \cite{MeShT}, 
\cite{PaSh21}, \cite{ShPo}, \cite{ShTaNS}, \cite{Tema79}. We
	describe the general form of solutions to these operators. Moreover, 
	for a particular case, where only one diagonal element of Stokes' matrix is non-zero and has real 
	analytic entries, we 
construct (in an explicit form) suitable  bilateral fundamental solutions 
for them. As an immediate corollary we produce a homotopy formula for 
regular solutions to this type of operators.

\section{Preliminaries}

Let ${\mathbb R}^n$, $n \geq 2$, be the $n$-dimensional Euclidean space with the coordinates  
$x=(x_1, \dots , x_n)$ and let  $D \subset {\mathbb R}^n$ be a bounded domain (open connected set). As 
usual, denote by  $\overline{D}$ the closure of $D$, and by  $\partial D$ its boundary. 

For $s \in {\mathbb Z}_+$ we denote by $C^s(D)$ and $C^s(\overline D)$  the spaces of all $s$ times 
continuously differentiable functions on $D$ and $\overline D$, respectively;  
$C^\infty (D) = \cap_{s\in {\mathbb Z}_+} C^s (D)$. We endow the space $C^s(D)$ with the standard 
Fr\'echet  topology of the uniform convergence  on compact 
subsets of $D$ with all the partial derivatives up to order $s$. Let also 
$C^\infty_0 (D)$ be the set of smooth functions with compact support in $D$. 

Let $\Lambda^q$ stand for the (trivial) vector  bundle of the exterior differential forms of degree $q$ on 
${\mathbb R}^n$. As it is known, the rang of the bundle $\Lambda ^q$ equals to the binomial coefficient 
$k_q = \left( \begin{array}{ll} n \\ q \\ \end{array} \right)$. 

Recall that a differential form $u$ of a degree $q$, 
$0\leq q \leq n$, of some topological space ${\mathfrak C} (D,\Lambda^q)$ on the domain $D$ 
is given by 
$$
u(x) = \sum_{\sharp I = q} u _I(x) dx_I,
$$
where $I= (i_1, \dots i_q)$, $dx_I = dx_{i_1} \wedge \dots \wedge dx_{i_q}$, $1\leq i_j \leq n$,
$\wedge$ is the exterior product of differential forms, providing the relation $dx_i\wedge dx_j = 
- dx_j\wedge dx_i $ for differentials $dx_i$, and the coefficients $u_I$ belongs to  ${\mathfrak C} (D)$, 
 see for instance, \cite{deRham}, \cite[Ch. 6]{WRL},  
The class will be endowed 
with the topology induced from ${\mathfrak C} (D)$ component-wise.

Thus, let $\{ d_q, \Lambda^q\}$ be the de Rham complex of exterior differentials on differential forms on 
${\mathbb R}^n$, see for instance, \cite{deRham}, \cite[Ch. 6]{WRL},  
\begin{equation} \label{eq.deRham}
0\rightarrow 
C^\infty({\mathbb R}^n,\Lambda^0)\stackrel{d_0}{\rightarrow}C^\infty({\mathbb R}^n, \Lambda^1)
\rightarrow \dots 
\stackrel{d_{n-1}}{\rightarrow} C^\infty({\mathbb R}^n, \Lambda^n) \rightarrow 0, 
\end{equation}
The de Rham differentials $d_q$, 
$$
d_q u = \sum_{j=1}^n  \sum_{\sharp I = q} \frac{\partial u _I}{\partial x_j} dx_j \wedge dx_I,
$$
 satisfy familiar relations
\begin{equation} \label{eq.deRham.0}
d_q = 0 \mbox{ if } q<0 \mbox{ or } q\geq n, \, 
d_q \, d_{q-1} = 0 .  
\end{equation}

Let $\star$ be the $\star$-Hodge operator,  see for instance, \cite{deRham}, \cite[Ch. 6]{WRL}, 
mapping $q$-forms to $(n-q)$-forms in such a way that 
for $q$-forms $u,v$ we have 
$$
u \wedge \star v = \sum_{\sharp I = q} v_I u_I dx .
$$

Let $Y$ be a  measurable subset in ${\mathbb R}^n $ and let $L^2 (Y)$ be the standard Lebesgue space
with the inner product  
$$
(u,v)_{L^2 (Y)} = \int_Y v (x) u(x) dx.
$$
The operator $\star$ may be used to define the inner product on the space $L^2 (Y,\Lambda^q)$ 
of differential forms of the degree $q$, $0\leq q \leq n$, with $L^2 (Y)$ coefficients:
$$
(u,v)_{L^2 (Y, \Lambda^q)} = \int_Y u(x)\wedge \star v (x).
$$

Denote by $d^*_q$ the formal adjoint differential operator for $d_q$:
$$
(d_q u,v)_{L^2 ({\mathbb R}^n, \Lambda^{q+1})} = ( u, d^*_qv)_{L^2 ({\mathbb R}^n, \Lambda^{q})} 
\mbox{ for all } v \in  C^\infty_0 ({\mathbb R}^n, \Lambda^{q+1}).
$$ 
As it is known, see \cite[\S 2.5.2]{Tark35}, \cite{deRham}, \cite[Ch. 6]{WRL}, 
$d^*_q v =  (-1)^{nq +1}\star d_{n-q-1} \star v$ for a $(q+1)$-form $v$. 

Then Stokes integration formula provides the (first) Green formula for the differential 
operator $d_q$ in any Lipschitz domain $D$:
\begin{equation} \label{eq.Green.d}
\int_{\partial D} u \wedge 
\star v =  (d_q u,v)_{L^2 (D, \Lambda^{q+1})} - ( u, d^*_qv)_{L^2 (D, \Lambda^{q})} 
\end{equation}
for all $u \in  C^\infty (\overline D, \Lambda^{q})$, $v \in  C^\infty (\overline D, \Lambda^{q+1})$, \cite[\S 2.5.2]{Tark35}.


Next, let   
\begin{equation} \label{eq.Laplace.0}
\Delta_{q} = d_q^* d_q + d_{q-1} d^*_{q-1}
\end{equation}
stand for the Hodge Laplacians of the de Rham complex,  
see, for instance, \cite[\S 2.5.2]{Tark35}, \cite{deRham}, \cite[Ch. 6]{WRL}.
The differential 
operators $\Delta_{q}$ are strongly elliptic, formally self-adjoint 
and coincide with the (minus) matrix Laplace operator, applied to a $q$-form $u$ coefficient-wise:
\begin{equation} \label{eq.Laplace.1}
\Delta_{q} u = - \sum_{\sharp I = q} (\Delta u _I) dx_I , \, 0\leq q \leq n.
\end{equation}

By \eqref{eq.deRham.0} we easily obtain
\begin{equation} \label{eq.deRham.1}
 d^*_{q-1} d^*_q = 0, \,
d_q \Delta_q =  \Delta_{q+1} d_q = d_q d_q^* d_q , \, d^* _{q-1} \Delta_q =  \Delta_{q-1} d^*_{q-1}  = 
d^*_{q-1} d_{q-1} d^*_{q-1} .
\end{equation}

If we treat the operators $d_j, d^*_j$ as matrix differential operators, the we may introduce 
Lam\'e type operators:
$$
\Delta_{q,\mu} = d_q^* {\mathcal M}_q d_q + d_{q-1} \tilde {\mathcal M}_q d^*_{q-1}  ,
$$
for some pair $\mu_q = ({\mathcal M}_q$, $\tilde {\mathcal M}_q)$ of functional matrices  with smooth entries on 
the closure $\overline X$ of a domain $X \subset {\mathbb R}^n$. If these matrices 
are self-adjoint and positive on $\overline X$,  then the differential 
operators $\Delta_{q,\mu}$ are strongly elliptic, formally self-adjoint and hypoelliptic on $X$. If, in addition, 
the entries of the matrices ${\mathcal M}_q$, $\tilde {\mathcal M}_q$ are real 
analytic then solution to the operators $\Delta_{q,\mu}$ are real analytic  by Petrovskii theorem.

Similarly to \eqref{eq.deRham.1}, we have 
\begin{equation} \label{eq.deRham.1mu}
d_q^* {\mathcal M}_q d_q \Delta_{q,\mu} =  \Delta_{q,\mu} d_q^* {\mathcal M}_q d_q , \, 
d _{q-1} \tilde {\mathcal M}_q d^* _{q-1} \Delta_{q,\mu} =  \Delta_{q,\mu} d _{q-1} \tilde  {\mathcal M}_q d^*_{q} , 
\end{equation}

In the framework of theory differential forms, the multiplication ${\mathcal M} u$ for a self-adjoint matrix 
${\mathcal M}$ and $q$-form $u$ may be organized as follows. We identify ${\mathcal M}$ with a set $q$-differential 
forms $\{ {\mathcal M}^{(I)} \}_{\sharp I = q}$, satisfying ${\mathcal M}_{J}^{(I)} = 
{\mathcal M}_{I}^{(J)}$, and then 
\begin{equation} \label{eq.multip}
{\mathcal M} u = \sum_{\sharp I = q} (\star (u \wedge \star {\mathcal M}^{(I)} ))dx_I.
\end{equation}
In this way,  formulae \eqref{eq.Green.d}, \eqref{eq.multip} induce the (first) 
Green formula for the differential 
operator $\Delta_{q,\mu}$ in a Lipschitz domain $D$, \cite[\S 2.4.2]{Tark35}:
\begin{equation} \label{eq.Green.Delta1} 
\int_{\partial D} {\mathcal G}_{\Delta_{q,\mu}} (v,u)= 
 (\Delta_{q,\mu} u,v)_{L^2 (D, \Lambda^{q})} - ( u,  \Delta_{q,\mu} v)_{L^2 (D, \Lambda^{q})} 
\mbox{ for all } u,v \in  C^\infty (\overline D, \Lambda^{q}),
\end{equation}
where  ${\mathcal G}_{\Delta_{q,\mu}} (\cdot,\cdot)$ is
the Green operator for $\Delta_{q,\mu}$ that is  given by
\begin{equation}
\label{eq.GDelta}
v\wedge \star ({\mathcal M}_q d_q u ) 
+ (\tilde  {\mathcal M}_q d^*_{q-1} u) \wedge 
\star v  
 -  u\wedge \star ({\mathcal M}_q d_q v ) 
- (\tilde  {\mathcal M}_q d^*_{q-1} v) \wedge 
\star u  . 
\end{equation}

\section{Stokes' operators}

Consider  the Stokes'  type operator for forms of degrees $0$ and $1$:  
$$
S_{1,\mu} = \left( 
\begin{array}{lll}
\Delta_{1,\mu} & d_0 \\
d_0^* & 0 \\
\end{array}
\right) .
$$
This gives the classical Stokes' operator if ${\mathcal M}_1$ and 
$\tilde {\mathcal M}_1$ are unit matrices of the corresponding dimensions, \cite{Mas97}, 
\cite{Tema79}, playing an essential role in Hydrodynamics.

For arbitrary $q$, $1\leq q \leq n$, generalized 
Stokes' operators $S_{q,\mu}$ can be defined as three-diagonal $((q+1)\times (q+1))$-block 
matrix, see \cite{ShPMi}, with the following block-entries:
$$
S_{q,\mu} ^{j,j} = \Delta_{q-j+1,\mu},   \, 1 \leq j \leq q+1, \, 
S_{q,\mu} ^{i,i+1} = d_{q-i}  , \, S_{q,\mu} ^{i+1,i} = d^*_{q-i}  \, 1 \leq i \leq q,
$$
$$
S_{q,\mu} ^{i,j} =0, \, 1 \leq i,j \leq q+1, i\ne j, i\ne j+1, \, j\ne i+1, 
$$
or, in the matrix form,
$$
S_{q,\mu} = \left( 
\begin{array}{llllllllll}
\Delta _{q,\mu_q}  & d_{q-1} & 0 & \dots & \dots & \dots & \dots & \dots & 0 \\
d^*_{q-1} & \Delta _{q-1,\mu} & d_{q-2} & 0 & \dots & \dots & \dots & \dots & 0 \\
0 & d^*_{q-2} & \Delta _{q-2,\mu} & d_{q-3} & 0 & \dots & \dots & \dots & 0 \\
\dots & \dots & \dots  & \dots  & \dots & \dots & \dots & \dots & \dots \\
\dots & \dots & \dots  & \dots  & \dots & \dots & \dots & \dots & \dots \\
0 &  \dots  &  \dots  &  \dots & 0 &  d_{2}^* & \Delta _{2,\mu}  & d_1 & 0 \\
0 &  \dots & 0 &  0 & \dots & 0 & d_1^*& \Delta _{1,\mu} & d_0 \\
0 &  \dots & 0 &  0 &  & 0 & 0 & d^*_0 & \Delta _{0,\mu} \\
\end{array}
\right) 
 $$
where $\mu = (\mu_q, \dots \mu_q)$, and the pairs $\mu_j = ({\mathcal M}_j, \tilde {\mathcal M}_j)$, $0\leq j \leq j_0<q$ can be zeroes. 
The second order differential operator $S_{q,\mu}$ maps the space 
${\mathbf C}^\infty_q (X) = \oplus_{j=0}^q C^\infty (X, \Lambda ^{q-j})$ to itself. 
We tacitly assume that $X={\mathbb R}^n$ if the coefficients of the operator $S_{q,\mu}$ are constant.  

Formulae \eqref{eq.Green.d}, \eqref{eq.Green.Delta1} induce the (first) 
Green formula for the differential 
operator $S_{q,\mu}$ in a Lipschitz domain $D$:
\begin{equation} \label{eq.Green.Sqmu1}
\int_{\partial D} {\mathcal G}_{S_{q,\mu}} (v,u)
= (S_{q,\mu} u,v)_{{\mathbf L}^2_q (D)} - ( u,  S_{q,\mu} v)_{{\mathbf L}^2_q (D)} 
\end{equation}
for all $u = (u_q, \dots u_0)$, $v=(v_q, \dots v_0)\in  {\mathbf C}_{q}^\infty (\overline D, \Lambda^{q})$,
where 
$${\mathbf L}^2_q (D) = \oplus_{j=0}^q L^2 (D, \Lambda ^{q-j}), \, 
{\mathbf C}^\infty_{q} (\overline D) = \oplus_{j=0}^q C^\infty (\overline D, \Lambda ^{q-j}),
$$
 and ${\mathcal G}_{S_{q,\mu}} (\cdot,\cdot)$ is 
the Green operator for $S_{q,\mu}$, that is  given by
\begin{equation} \label{eq.GS}
{\mathcal G}_{S_{q,\mu}} (v,u) = 
 \sum_{j=0}^{q-1} \big( u_{j} \wedge \star v_{j+1} - v_{j} \wedge \star u_{j+1} + 
G_{\Delta_{j,\mu}} (v_j,u_j)\big) + G_{\Delta_{q,\mu}} (v_q,u_q) . 
\end{equation} 
 
Obviosly, $S_{q,\mu}$ is (Petrovskii) 
elliptic if all the matrices ${\mathcal M}_j, \tilde {\mathcal M}_j$, 
$0\leq j \leq q$, are positive. 
It was shown in \cite{ShPMi} that $S_{q,\mu}$ is Douglis-Nirenberg elliptic if 
matrices ${\mathcal M}_q, \tilde {\mathcal M}_q$ are positive on $X$ (cf. \cite{Mas97} for the classical 
Stokes' operator). 

Thus, if all the matrices ${\mathcal M}_j, \tilde {\mathcal M}_j$, 
$0\leq j \leq q$, are positive, the standard approximation theorems for Petrovskii 
elliptic operators are still valid for $S_{q,\mu}$. So, we
are interested in the case where $S_{q,\mu}$ is Douglis-Nirenberg elliptic, only.  

As it is known, regularity theorems for Douglis-Nirenberg elliptic operators are similar to 
the Petrovskii elliptic operators, \cite[Ch. 9]{WRL}. 
Using the specific structure, we may show  this fact for  Stokes' operator $S_{q,\mu}$ directly, obtaining 
additional important information on its solutions. 

With this purpose, 
 let ${\mathcal S} _{S_{q,\mu}}(D)$ be the set of all the generalized 
solutions to the  equation
\begin{equation} \label{eq.sol}
S_{q,\mu} u = 0 \mbox{ in } D.
\end{equation}

\begin{prop} \label{p.0} 
Let $1\leq j_0\leq q\leq n$ and   $ \Delta _{j,\mu} = 0$ for all $0\leq j \leq j_0-1$. 
If the self-adjoint matrices ${\mathcal M}_j, \tilde {\mathcal M}_j$, $j_0\leq j\leq q$, are positive and $
C^\infty$-smooth on $\overline X$, then any solution $u = (u_q, \dots u_0) \in {\mathcal S} _{S_{q,\mu}}(D)$
belongs to ${\mathbf C}^\infty_{q} (D)$; 
besides entries of $u_j $ are harmonic for $0\leq j \leq j_0-2$. Moreover, if $j_0>1$ and ${\mathcal M}_j$ and 
$\tilde {\mathcal M}_j$ are real analytic for all $j_0\leq j \leq q$ then $u_q, u_{q-1}, 
\dots u_{j_0-1}$ are real analytic in $D$, too. 
In the exceptional case $j_0 =1$, the function $u_0$ is harmonic and $u_j$, $1\leq j\leq q$, are smooth in $D$;
$u_1, u_2, \dots u_q $ are real analytic, if ${\mathcal M}_1, \dots {\mathcal M}_q$, $\tilde {\mathcal M}_2, \dots 
\tilde {\mathcal M}_q$  are real analytic in $D$.
\end{prop}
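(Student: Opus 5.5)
The plan is to split the system $S_{q,\mu}u=0$ into its block rows, indexed by form degree, and to use the complex relations \eqref{eq.deRham.0} and \eqref{eq.deRham.1} to decouple the low-degree components. Write $u=(u_q,\dots ,u_0)$ with $u_k$ a $k$-form, and set $u_{-1}=0$, $u_{q+1}=0$. The row of degree $k$ reads
$$
d^*_k u_{k+1} + \Delta_{k,\mu} u_k + d_{k-1} u_{k-1} = 0 ,
$$
in the sense of distributions. For $k\le j_0-1$ the middle term vanishes by assumption, so that row is a pure first-order relation $d^*_k u_{k+1} + d_{k-1}u_{k-1}=0$.

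\textbf{Step 1: the components of degree $\le j_0-2$ are harmonic.} Fix $j$ with $0\le j\le j_0-2$.
\begin{itemize}
\item Apply $d^*_{j}$ to the row of degree $j+1$. Since $d^*_j d^*_{j+1}=0$, this gives $d^*_j d_j u_j=0$.
\item Apply $d_{j-1}$ to the row of degree $j-1$, which is absent when $j=0$. Since $d_{j-1}d_{j-2}=0$, this gives $d_{j-1}d^*_{j-1}u_j=0$.
\end{itemize}
Adding the two identities yields $\Delta_j u_j=0$ in $D$. By \eqref{eq.Laplace.1} every coefficient of $u_j$ is then a harmonic distribution, hence a real analytic function by Weyl's lemma. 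This proves the harmonicity claim for $0\le j\le j_0-2$ and smoothness of these components.

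\textbf{Step 2: the reduced system for the high-degree components.} With $u_0,\dots ,u_{j_0-2}$ known to be smooth (in fact analytic), I move them to the right-hand side. The unknowns $(u_q,\dots ,u_{j_0},u_{j_0-1})$ then satisfy a truncated Stokes-type system $S' w = F$, where
\begin{itemize}
\item the rows of degree $j_0,\dots ,q$ are unchanged;
\item the row of degree $j_0-1$ reads $d^*_{j_0-1}u_{j_0}+d_{j_0-2}u_{j_0-2}=0$;
\item the relation $d^*_{j_0-2}u_{j_0-1}=-d_{j_0-3}u_{j_0-3}$ is adjoined from the row of degree $j_0-2$;
\item $F$ has smooth (analytic) entries.
\end{itemize}
The operator $S'$ is again of the form $S_{q,\mu}$, after a shift of indices, with the single lowest diagonal block equal to zero and positive $\mathcal M_q,\tilde{\mathcal M}_q$. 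By the result of \cite{ShPMi} quoted above, it is Douglis--Nirenberg elliptic. The general interior regularity for such systems \cite[Ch. 9]{WRL} then gives $w\in C^\infty(D)$. If the entries of $\mathcal M_j,\tilde{\mathcal M}_j$ are real analytic, the analytic version of this regularity theorem (Morrey--Nirenberg, or Petrovskii's theorem for DN-elliptic systems with analytic coefficients) gives analyticity of $u_q,\dots ,u_{j_0-1}$.

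\textbf{Main obstacle.} The step I expect to be hardest is checking that the reduced system is genuinely (overdetermined) DN-elliptic, so that the regularity theorem applies to it. The alternative is to verify the symbol computation directly: one would show that the principal symbol is injective with the weights $(2,\dots,2,1)$. To do this I would compute the symbol on $\xi\ne0$, exploit exactness of the symbol complex $\xi\wedge$ together with positivity of the $\mathcal M$'s, and mimic the classical Stokes argument of \cite{Mas97}.

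\textbf{The exceptional case $j_0=1$.} Here Step 1 is replaced by a direct computation. The degree-$0$ row is $d_0^*u_1=0$. Apply $d_0^*$ to the degree-$1$ row. The term $d_0^*d_1^*(\mathcal M_1 d_1 u_1)$ vanishes, and so does $d_0^*d_1^*u_2$. Since $\tilde{\mathcal M}_1$ acts on $0$-forms, $\tilde{\mathcal M}_1 d_0^*u_1=0$, so the remaining term of $\Delta_{1,\mu}u_1$ drops out as well. What is left is $\Delta_0u_0=0$, so $u_0$ is harmonic. Then $(u_q,\dots,u_1)$ solves a DN-elliptic system with analytic right-hand side, and the coefficient $\tilde{\mathcal M}_1$ has disappeared. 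This is why the analyticity hypothesis is needed only on $\mathcal M_1,\dots,\mathcal M_q,\tilde{\mathcal M}_2,\dots,\tilde{\mathcal M}_q$.
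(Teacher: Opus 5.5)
Your Step~1 is correct and is exactly the paper's argument. The gap is in Step~2, which carries the real content of the proposition.

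\textbf{The gap in Step 2.} Your claim that $S'$ is ``again of the form $S_{q,\mu}$ after a shift of indices'' is false.
\begin{itemize}
\item The lowest unknown of any $S_{q',\mu'}$ is a $0$-form, whereas yours is the $(j_0-1)$-form $u_{j_0-1}$ with $j_0-1\ge 1$.
\item In the square truncation (rows of degree $q,\dots,j_0-1$), $u_{j_0-1}$ enters only through $d_{j_0-1}$. Its symbol kills $\xi\wedge\Lambda^{j_0-2}\neq 0$, so that truncation is not elliptic for any choice of weights. This is exactly why you had to adjoin $d^*_{j_0-2}u_{j_0-1}=-d_{j_0-3}u_{j_0-3}$.
\item With that equation adjoined, $S'$ is an \emph{overdetermined} system. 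The result of \cite{ShPMi} concerns the square operators $S_{q,\mu}$, and the interior regularity of \cite[Ch.~9]{WRL} concerns determined Douglis--Nirenberg systems. Neither applies as cited, and the same holds for the analytic regularity theorems you invoke.
\end{itemize}
Your fallback symbol computation is on target. Take $t=(2,\dots,2,1)$ on $(u_q,\dots,u_{j_0},u_{j_0-1})$, and $s=(0,\dots,0,-1,0)$ on the rows of degree $q,\dots,j_0$, then $j_0-1$, then the adjoined row. The principal symbol is then injective for $\xi\neq 0$. This follows from the splitting $\Lambda^k=\xi\wedge\Lambda^{k-1}\oplus\iota_\xi\Lambda^{k+1}$ (with $\iota_\xi$ the interior product) and the positivity of $\mathcal M_{j_0},\tilde{\mathcal M}_{j_0}$. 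Even so, you would still need a $C^\infty$ and a real-analytic interior regularity theorem for overdetermined DN-elliptic systems, and you do not supply one.

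\textbf{A simple repair.} You can avoid Douglis--Nirenberg theory entirely by decoupling $u_{j_0-1}$ with the same device as in your Step~1.
\begin{itemize}
\item Apply $d^*_{j_0-1}$ to the row of degree $j_0$ and use $d^*_{j_0-1}d^*_{j_0}=0$.
\item Insert the row of degree $j_0-1$, namely $d^*_{j_0-1}u_{j_0}=-d_{j_0-2}u_{j_0-2}$.
\item Add $d_{j_0-2}d^*_{j_0-2}u_{j_0-1}=0$.
\end{itemize}
This gives
\[
\Delta_{j_0-1}u_{j_0-1}=d^*_{j_0-1}d_{j_0-1}\tilde{\mathcal M}_{j_0}d_{j_0-2}u_{j_0-2}.
\]
The right-hand side is already smooth by Step~1, and analytic if $\tilde{\mathcal M}_{j_0}$ is. Then $(u_q,\dots,u_{j_0})$ solves the \emph{square} second-order system formed by the rows of degree $q,\dots,j_0$. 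Its principal part is block-diagonal with the strongly elliptic blocks $\Delta_{j,\mu}$, so Petrovskii regularity and Petrovskii's analyticity theorem apply directly.

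\textbf{Comparison with the paper.} The paper also stays within Petrovskii theory, but by a different route. It applies $\Delta_j$ to the rows and uses \eqref{eq.deRham.1}, replacing $\tilde{\mathcal M}_{j_0}$ by the identity in the row of degree $j_0$. This yields a fourth-order elliptic system for $u_{j_0},\dots,u_q$. It then recovers $u_{j_0-1}$ from $\Delta_{j_0-1}u_{j_0-1}=-d^*_{j_0-1}d_{j_0-1}\tilde{\mathcal M}_{j_0}d^*_{j_0-1}u_{j_0}$.

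\textbf{The case $j_0=1$.} Your treatment matches the paper's, with one point you should make explicit. Simply dropping $d_0\tilde{\mathcal M}_1d^*_0u_1$ leaves $d^*_1\mathcal M_1d_1$ on the diagonal, which is not elliptic. You must add back $d_0d^*_0u_1=0$, that is, replace $\tilde{\mathcal M}_1$ by the identity as the paper does. Only then do you obtain a Petrovskii elliptic system free of $\tilde{\mathcal M}_1$.
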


\begin{proof} Indeed, if $q \geq 2$ and $u = (u_q, \dots u_0)\in {\mathcal S} _{S_{q,\mu}}(D)$ then 
\begin{equation} \label{eq.Sq.q}
\left\{ 
\begin{array}{lllll}
d_0^* u_1 = 0 , \, \Delta_{q,\mu} u_q + d_{q-1} u_{q-1} = 0, \\
d^*_{j-1} u_j + \Delta_{j-1,\mu} u_{j-1}+ d_{j-2} u_{j-2} = 0, \, j_0+1\leq j\leq q, \\
d^*_{j-1} u_j + d_{j-2} u_{j-2} = 0, \, 2 \leq j\leq j_0, \\
\end{array}
\right.
\end{equation}
and hence, by \eqref{eq.deRham}, we have in $D$:
\begin{equation} \label{eq.Sq.q.c1}
\left\{ 
\begin{array}{lllll}
d_0^* u_1 = 0 , \, d_{j-1}d^*_{j-1} u_j  = 0, \,  d^*_{j-2} d_{j-2} u_{j-2} = 0, \, 2 \leq j\leq j_0, \\ 
d^*_{q} d_{q} d^*_{q} {\mathcal M}_{q} d_{q} u_q  = 0, 
\\ 
d_{q-1}  d^*_{q-1}d_{q-1}  \tilde {\mathcal M}_{q} d^*_{q-1} u_q + 
d_{q-1} d_{q-1} ^* d_{q-1} u_{q-1} = 0, \\
d_{j-1}d^*_{j-1} u_j + d_{j-1} d^*_{j-1} {\mathcal M}_{j-1} d_{j-1} u_{j-1} = 0, \, j_0+1\leq j\leq q, 
\\
d^*_{j-2} d_{j-2}\tilde {\mathcal M}_{j-1} d^*_{j-2} u_{j-1} + d^*_{j-2} d_{j-2} u_{j-2} = 0, \, j_0+1\leq j\leq q.
\end{array}
\right.
\end{equation}
In particular, the first equation in \eqref{eq.Sq.q.c1} yields 
$$
(d^*_{j} d_{j}  +  d_{j-1}d^*_{j-1}) u_j  = \Delta_j u_j= 0, \,  0 \leq j\leq j_0-2,
$$
i.e. $u_0, \dots u_{j_0-2}$ have harmonic coefficients in $D$, i.e. they are real analytic there.  

Next, \eqref{eq.deRham} and \eqref{eq.Sq.q} imply the following identities in $D$:
\begin{equation} \label{eq.Sq.q.c2}
\left\{ 
\begin{array}{lllll}
\Delta_q \Delta_{q,\mu} u_q +  d_{q-1} d^*_{q-1} d_{q-1} u_{q-1}= 0, \\
\Delta_j \Delta_{j,\mu} u_j + d^*_{j} d_{j}d^*_{j} u_{j+1} +  d_{j-1} 
d^*_{j-1} d_{j-1} u_{j-1} = 0,  \, j_0+1 \leq j \leq q-1, \\ 
\Delta_{j_0} (d^*_{j_0} {\mathcal M}_{j_0} d_{j_0} + d_{j_0-1} d^*_{j_0-1}) u_{j_0} + 
d^*_{j_0} d_{j_0} d^*_{j_0} u_{j_0+1} = 0   .
\end{array}
\right.
\end{equation}
It is easy to see that the principal symbol of the fourth order of 
system \eqref{eq.Sq.q.c2} is fourth order with respect to forms $u_j$, $j_0\leq j \leq q$, is non-degenerate. 
Hense the system is (Petrovskii) elliptic and 
by the elliptic regularity, these forms  belong to $C^\infty (D,\Lambda^j)$.  If the coefficients of this 
system are real analytic, 
then, by Petrovskii Theorem, the coefficients of the forms $u_{j_0}, \dots u_q$ are real analytic, too.

In addition, for $j=j_0+1$, 
the last equation in \eqref{eq.Sq.q.c1} means that  the form $u_{j_0-1}$ is a solution to the second 
order (Petrovskii) elliptic system of equations 
with the  form $u_{j_0}$ having the properties discussed above: 
$$
\Delta_{j_0-1} u_{j_0-1} = -  d^*_{j_0-1}  d_{j_0-1} \tilde {\mathcal M}_{j_0} d_{j_0-1}^* u_{j_0}.
$$
Thus, the coefficients of the form $u_{j_0-1}$  are smooth in $D$ if $\tilde {\mathcal M}_{q-1}$ is smooth; 
 they are  real analytic in $D$ if $\tilde {\mathcal M}_{j_0}$ is real analytic.

It is left to consider the exceptional case $j_0=1$.

If $j_0=q=1$ then \eqref{eq.deRham} and \eqref{eq.Sq.q} imply 
\begin{equation*} 
\left\{ 
\begin{array}{lllll}
d_0^* u_1 = 0 , \,
0= d_0^* d^*_1 {\mathcal M}_1 d_1 u_1 +  d_0^* d_0 \tilde {\mathcal M}_1 d_0^* u_1  + d_0^* d_0 u_0 = d_0^* d_0 u_0 , 
\\
0= d_1 d^*_1 {\mathcal M}_1 d_1  u_1  + d_1 d_0 \tilde {\mathcal M}_1 d_0^*u_1  + d_1 d_0 u_0 = 
d_1 d^*_1 {\mathcal M}_1 d_1 .
\end{array}
\right.
\end{equation*}
In particular, $u_0$ is harmonic and $u_1$ satisfies 
the fourth order (Petrovskii) elliptic system
$$
\Delta_1 (d^*_1 {\mathcal M}_1 d_1 + d_0 d_0^*) u_1 = 0  \mbox{ in } D.
$$
If the coefficients of this system are real analytic, then by Petrovskii Theorem, the coefficients 
of the form $u_1$ are real analytic, too.  

Finally, if $q\geq 2$,  $j_0=1$ then the last equation in \eqref{eq.Sq.q.c1}  with $j=j_0+1 = 2$ 
mean that $d_0^*  d_0 u_0 = 0$ in $D$ because $d_0^* u_1 = 0 $. In particular, $u_0$ is harmonic (and real analytic) 
and  besides, \eqref{eq.Sq.q} yields that $u_1, u_2, \dots u_q$ satisfy the second order Petrovskii elliptic system 
of equations:
\begin{equation*} 
\left\{ 
\begin{array}{lll}
\Delta_{2,\mu} u_2 + d_{1} u_{1} = 0  \mbox{ in } D \mbox{ if }  q=2,\\
  d_1^* u_2+ (d_1^* {\mathcal M}_1 d_1   + d_0 d^*_0 ) u_1 = -d_0 u_0  \mbox{ in } D, \\
\end{array}
\right.
\end{equation*}
\begin{equation*} 
\left\{ 
\begin{array}{lll}
\Delta_{q,\mu} u_q + d_{q-1} u_{q-1} = 0  \mbox{ in } D  \mbox{ if }  q\geq 3, \\
 d_{j-1}^* u_{j} +
\Delta_{j-1,\mu} u_{j-1} + d_{j-2} u_{j-2} = 0  \mbox{ in } D, 3 \leq j \leq q,\\
 d_1^* u_2+ (d_1^* {\mathcal M}_1 d_1  + d_0 d^*_0 ) u_1 = -d_0 u_0  \mbox{ in } D, \\
\end{array}
\right.
\end{equation*}
Hence $u_j \in C^\infty (D,\Lambda^j)$, $0\leq j \leq q$. But $u_0$ is a real analytic function and therefore, 
 if the coefficients of this system are real analytic, then by Petrovskii Theorem, the 
coefficients of the forms $u_1,u_2, \dots u_q$ are real analytic, too.  
\end{proof}

\section{
A homotopy formula}

It is known that, similarly to 
the Petrovskii elliptic operators, the Douglis-Nirenberg elliptic operators admit
parametrices and fundamental solutions,  \cite[Ch. 8]{WRL}. 
Using the specific structure, we may show  this fact for  Stokes' operator $S_{q,\mu}$ in a direct and 
constructive way. 

We begin with simplest case. With this purpose, we note that for each $p\in \mathbb N$
formula \eqref{eq.Laplace.1} yields that the powers $\Delta^p_q$ of Laplacians $\Delta_q$ admit bilateral fundamental solutions
$\Phi_{\Delta_q}^p$ on ${\mathbb R}^n$ that are given by 
$$
\Phi_{\Delta_q^p} u =  \sum_{\sharp I = q} (g_p u _I) dx_I , \, (g_p u_I)(x) = \int_{{\mathbb R}^n} g_p(x-y) u_I (y) dy, \, u \in C^\infty_0 ({\mathbb R}^n, \Lambda^q),
$$
where $g_p$ is the standard fundamental solution to the Laplace operator $\Delta^p$ in  ${\mathbb R}^n$:
$$
g_p(x) = 
\left\{ 
\begin{array}{lll} c_{n,p} |x|^{2p-n}\ln |x|, & p-n/2 \in {\mathbb Z}_{\geq 0}, \\ [.1cm]
 c_{n,p} \, |x|^{2p-n}, & p-n/2 \not \in {\mathbb Z}_{\geq 0},
\end{array}
\right. 
$$
with well known constants $c_{n,p}$, depending on the power $p$  and the dimension $n$. For $p=1$ 
we wrtie  $\Phi_q $ instead of $\Phi_{\Delta_q}$.


\begin{thm} \label{t.const} 
Let $q\geq 1$ and  $ \Delta _{j,\mu} = 0$ for all $0\leq j \leq q-1$. If ${\mathcal M}_q = 
\lambda_q I_{k _{q+1}}$, 
$\widetilde {\mathcal M}_q =  \tilde \lambda_q I_{k_{q-1}}$ with some positive numbers  
$\lambda_q$ and  $\tilde \lambda_q$,  then

1) a bilateral fundamental solution to $S_{q,\mu}$ is given by 
the three-diagonal $((q+1)\times (q+1))$-block matrix $\Psi^{(r)}_{q,\mu}$ with the following block-entries:
$$
\Psi_{q ,\mu} ^{1,1} = \lambda_q^{-1}
d^*_{q} d_{q} \Phi_{\Delta_q^2}   , \,  \Psi_{q ,\mu} ^{1,2}  = d_{q-1} \Phi_{q-1},
$$
$$
\Psi_{q ,\mu} ^{2,1} =  
d_{q-1}^* \Phi_{q},\, \Psi_{q ,\mu} ^{2,2} =-  \tilde \lambda_q d_{q-1}^*d_{q-1}  \Phi_{q-1} , 
$$
and, for $q\geq 2$,
\begin{equation} \label{eq.q.geq.2.1}
\Psi_{q,\mu} ^{j,j+1} = d_{q-j} \Phi_{q-j}, \, 
\Psi_{q,\mu} ^{j+1,j} = d^*_{q-j} \Phi_{q-j+1}  ,  \, 2 \leq j \leq q,
\end{equation}
\begin{equation}
\label{eq.q.geq.2.2}
\Psi_{q,\mu} ^{i,j} =0, \, 3 \leq i,j \leq q+1, i\ne j+1, \, j\ne i+1;
\end{equation}


2) if $q=1$ then  
$\Psi_{q,\mu} ^{2,2}$ coincides with $(- \tilde \lambda_1)$;

\end{thm}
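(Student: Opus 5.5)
The plan is to verify directly that $S_{q,\mu}\Psi_{q,\mu}=I$ and $\Psi_{q,\mu}S_{q,\mu}=I$ on ${\mathbf C}^\infty_{q}$ forms with compact support in ${\mathbb R}^n$, by multiplying the two three-diagonal block matrices block by block. In the present situation $S_{q,\mu}$ has only one non-zero diagonal block, namely $S^{1,1}_{q,\mu}=\lambda_q d^*_q d_q+\tilde\lambda_q d_{q-1}d^*_{q-1}$. Its off-diagonal blocks are $d_{q-i}$ at position $(i,i+1)$ and $d^*_{q-i}$ at $(i+1,i)$. Hence every block of the product $S_{q,\mu}\Psi_{q,\mu}$ is a sum of at most three terms. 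Since $\Psi_{q,\mu}$ is also three-diagonal, only blocks $(i,k)$ with $|i-k|\le 2$ can be non-zero, so only finitely many patterns need checking.

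The tools are the following.
\begin{enumerate}
\item The convolution operators $\Phi_{\Delta_j^p}$ have constant coefficients and act coefficient-wise. By \eqref{eq.Laplace.1} they commute with $d$ and $d^*$ in the sense $d_j\Phi_{\Delta_j^p}=\Phi_{\Delta_{j+1}^p}d_j$ and $d^*_j\Phi_{\Delta_{j+1}^p}=\Phi_{\Delta_j^p}d^*_j$.
\item They satisfy $\Delta_j^p\Phi_{\Delta_j^p}=\Phi_{\Delta_j^p}\Delta_j^p=I$, and $\Delta_j\Phi_{\Delta_j^2}=\Phi_j$.
\item The complex relations \eqref{eq.deRham.0} and \eqref{eq.deRham.1} hold, together with $\Delta_j=d^*_jd_j+d_{j-1}d^*_{j-1}$ from \eqref{eq.Laplace.0}.
\end{enumerate}

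\textbf{Diagonal blocks.} For the $(1,1)$ block we get
\[
(\lambda_q d^*_qd_q+\tilde\lambda_q d_{q-1}d^*_{q-1})\lambda_q^{-1}d^*_qd_q\Phi_{\Delta_q^2}+d_{q-1}d^*_{q-1}\Phi_q .
\]
The cross term vanishes because $d^*_{q-1}d^*_q=0$. The remaining product $d^*_qd_qd^*_qd_q$ equals $d^*_qd_q\Delta_q$, so the block equals $d^*_qd_q\Phi_q+d_{q-1}d^*_{q-1}\Phi_q=\Delta_q\Phi_q=I$. The $(1,2)$ block is $\tilde\lambda_q d_{q-1}d^*_{q-1}d_{q-1}\Phi_{q-1}-\tilde\lambda_q d_{q-1}d^*_{q-1}d_{q-1}\Phi_{q-1}=0$; here the $\lambda_q$-part is killed by $d_qd_{q-1}=0$. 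For $j\ge2$ the diagonal block $(j,j)$ is $d^*_{q-j+1}d_{q-j+1}\Phi_{q-j+1}+d_{q-j}d^*_{q-j}\Phi_{q-j+1}=\Delta_{q-j+1}\Phi_{q-j+1}=I$. The sub- and superdiagonal blocks, and the blocks at distance two, vanish in the same manner: each term contains either $dd$ or $d^*d^*$, or the two terms cancel as in the $(1,2)$ block. The $(2,1)$ block contains $d^*_{q-1}d^*_q=0$. The $(2,2)$ block uses the $-\tilde\lambda_q$ entry, which is designed precisely so that row $2$ against column $2$ reproduces $\Delta_{q-1}\Phi_{q-1}$ after cancelling the $\tilde\lambda_q$-contribution coming from row $1$.

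\textbf{Left inverse.} I would repeat the same computation for $\Psi_{q,\mu}S_{q,\mu}$, now pushing $\Phi$ to the right with the commutation rules. Since all operators involved are formally self-adjoint up to transposition, one can alternatively note that $\Psi_{q,\mu}^*$ has the same structure and obtain the left identity from the right one by taking formal adjoints.

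\textbf{Case $q=1$.} For statement 2) the block becomes $-\tilde\lambda_1 d^*_0d_0\Phi_0$. On functions $d^*_{-1}=0$, so $d^*_0d_0=\Delta_0$ and $\Delta_0\Phi_0=I$, which gives $\Psi^{2,2}_{1,\mu}=-\tilde\lambda_1$.

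\textbf{Main obstacle.} The chief difficulty is bookkeeping rather than analysis. One must keep track of the degree indices, so that each $\Phi$ acts on forms of the right degree and $\Delta_j\Phi_{\Delta_j^2}=\Phi_j$ is applied in the correct degree. One must also check the boundary rows $j=q,q+1$ and the interaction of rows $1$–$3$, where the exceptional entries $\Psi^{1,1}$ and $\Psi^{2,2}$ meet the generic pattern \eqref{eq.q.geq.2.1}. I would handle this by first writing out the cases $q=1,2$ completely and then doing an induction on the row index for the generic part.
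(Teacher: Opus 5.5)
Your proposal is correct and follows the paper's proof. You multiply the blocks to get $S_{q,\mu}\Psi_{q,\mu}=I$, obtain the left identity from $S_{q,\mu}^*=S_{q,\mu}$ and $\Psi_{q,\mu}^*=\Psi_{q,\mu}$ (using that the convolutions commute with $d$, $d^*$ and are symmetric), and treat $q=1$ via $d_0^*d_0\Phi_0=\Delta_0\Phi_0=I$.

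Two small corrections. First, for $n=2$ one only has $\Delta_q\Phi_{\Delta_q^2}=\Phi_q$ up to convolution with a constant; this is harmless because the identity is only used composed with $d_q^*d_q$, which kills constants. Second, in $S_{q,\mu}\Psi_{q,\mu}$ the entry $\Psi^{2,2}_{q,\mu}$ does not enter the $(2,2)$ block at all, since it meets $S^{2,2}_{q,\mu}=\Delta_{q-1,\mu}=0$. Its factor $-\tilde\lambda_q$ is needed there only for the $(1,2)$ cancellation you already carried out.
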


\begin{proof} Calculating directly, we obtain 
\begin{equation}\label{eq.D2q}
\Delta g_2(x) = g_1(x) +c_n, \, \,
 d^*_{q-1} \Delta_q \Phi_{\Delta^2_q} = 
 d^*_{q-1}  \Phi_{q},
\end{equation} 
where  $c_n$ equals to zero in all the dimensions, except $n=2$.

If $q=1$ then,  
$d_{q-1}^* d_{q-1} \Phi_{q-1} =
d_{0}^* d_{0} \Phi_{0} = 
\Phi_{q-1} d_{q-1}^* d_{q-1}  =
\Phi_{0} d_{0}^* d_{0} = 
I$. Hence
\begin{equation} \label{eq.fund.ide.11.const}
\Psi_{1,\mu}  = \left( 
\begin{array}{lll}
\lambda_1^{-1} d^*_{1} d_{1} \Phi_{\Delta_1^2}    & 
d_0 \Phi_0 \\
 d_0^* \Phi_{1}  & - \tilde \lambda_1 \\
\end{array}
\right)  = 
 \left( 
\begin{array}{lll}
\lambda_1^{-1} d^*_{1} d_{1} \Phi_{\Delta_1^2}    & 
d_0 \Phi_0 \\
 d_0^* \Delta_1 \Phi_{\Delta_1^2}  & - \tilde \lambda_1 \\
\end{array}
\right).
\end{equation}

Then
\eqref{eq.deRham.1}, 
\eqref{eq.deRham.1mu}, 
\eqref{eq.D2q}, we have  $\Delta_{1,\mu} d_1^* d_1 = \lambda_1  \Delta_{1} d_1^* d_1$,  and  
$$
\Delta_{1,\mu} \Psi_{1,\mu} ^{1,1} + d_{0} \Psi_{1,\mu} ^{2,1} = 
\Delta_{1,\mu}
\lambda_1^{-1} d_1^* d_1   \Phi_{\Delta_1^2}  + d_0
 d_0^* \Delta_1 \Phi_{\Delta_1^2} =\Delta_1^2 \Phi_{\Delta_1^2} 
= I, $$
$$
d^*_0 d_0 \Phi_{0}  = I, \, 
d_0^*  \lambda_1^{-1} d_1^*  d_1   \Phi_{\Delta_1^2} =  0,
$$
$$
\Delta _{1,\mu} 
 d_0 \Phi_{0} 
 -  d_0 \tilde {\lambda}_1  = d_0 \tilde {\lambda}_1 d^*_0 d_0 \Phi_{0} 
- d_0 \tilde {\lambda}_1 = d_0 \tilde {\lambda}_1  - d_0 \tilde {\lambda}_1  = 0,
$$
i.e.  
$\Psi_{1,\mu}$ is a right fundamental solution to  $S_{1,\mu}$. 


If $q\geq 2$ then, 
by \eqref{eq.deRham.1}, 
\eqref{eq.deRham.1mu}, 
\eqref{eq.D2q}, 
$\Delta_{q,\mu} d_q^* d_q = \lambda_q  \Delta_{q} d_q^* d_q$, and  the multiplications of the first line of 
$S_{q,\mu}$ to the first three columns of 
$\Psi_{q,\mu}$ give us
$$
\Delta_{q,\mu} \Psi_{q,\mu} ^{1,1} + d_{q-1} \Psi_{q,\mu} ^{2,1} + 0\cdot 0 = 
$$
$$
\Delta_{q,\mu}
\lambda_q^{-1} d_q^* d_q   \Phi_{\Delta_q^2}  +
d_{q-1}
 d_{q-1}^* \Delta_q \Phi_{\Delta_q^2} =\Delta_q^2 \Phi_{\Delta_q^2} 
 =I_{k_q}, 
$$
$$
\Delta_{q,\mu}  d_{q-1}\Phi_{q-1} +   d_{q-1} \Psi_{q,\mu} ^{2,2} + 
0\cdot \Phi_{q-2}  d ^{*}_{q-2} + 0 \cdot 0 = 
$$
$$
\tilde \lambda_q d_{q-1}  d_{q-1}^* d_{q-1}  \Phi_{q-1} - \tilde \lambda_q
 d_{q-1} d_{q-1}^*
 d_{q-1} \Phi_{q-1}  + 
0\cdot \Phi_{q-2}  d ^{*}_{q-2} = 0.
$$
The multiplications 
of the first row of $S_{q,\mu}$ to the other columns of $\Psi_{q,\mu}$ equal, obviously, to zero.

The multiplications 
of the second line of $S_{q,\mu}$ to the first four columns of $\Psi_{q,\mu}$ give us
$$
 d^*_{q-1} \Psi_{q} ^{1,1}  + 0 \cdot \Psi_{q} ^{2,1}+ 0 \cdot 0  = 
 d^*_{q-1}  
\lambda_{q}^{-1} d_q^*  d_q \Phi_{\Delta_q^2}   =0 , 
$$
$$
d^*_{q-1}  d_{q-1}\Phi_{q-1} + 0\cdot \Psi_{q} ^{2,2} + d_{q-2} 
d^*_{q-2} \Phi_{q-1}   + 0\cdot 0 
= \Delta_{q-1}\Phi_{q-1} =I_{k_{q-1}}, 
$$
$$
d_{q-1}^* \cdot 0  + 0 \cdot  d_{q-2} \Phi_{q-2} + d_{q-2}  \cdot 0 + 0 \cdot 0= 0, 
$$
$$
d_{q-1}^* \cdot 0  + 0 \cdot 0 + d_{q-2} d_{q-3} \Phi_{q-3} + 0 \cdot 0 = 0, 
$$
and the multiplications 
of the second row of $S_{q,\mu}$ to the other columns of $\Psi_{q,\mu}$ equal, obviously, to zero.

The multiplications 
of the third line of $S_{q,\mu}$ to the first three columns of $\Psi_{q,\mu}$ give us 
$$
0\cdot \Psi_{q,\mu} ^{1,1} + d^*_{q-2} \Psi_{q,\mu} ^{2,1}  + 0 \cdot 0   + d_{q-2} \cdot 0= 
 d^*_{q-2} d_{q-1}^* \Phi_{q} = 0,
$$
$$ 
0 \cdot d_{q-1} \Phi_{q-1} + d^*_{q-2} \Psi_{q,\mu} ^{2,2} + 
0 \cdot \Phi_{q-2} d^*_{q-2} + 0 \cdot 0  = 
d^*_{q-2} d_{q-1}^* \Phi_{q} = 0
$$
$$
d^*_{q-2}  d_{q-2}\Phi_{q-2} + 0\cdot \Psi_{q,\mu} ^{3,3}
+ d_{q-3} d^*_{q-3}
\Phi_{q-2}   + 0\cdot 0 = 
\Delta_{q-2}\Phi_{q-2} =I_{k_{q-2}}, 
$$
and the multiplications 
of the third line of $S_{q,\mu}$ to the other columns of $\Psi_{q,\mu}$ equal, obviously, to zero.

Next, for $q\geq 3$ and all $j$ with $4\leq j \leq q+1$,   the multiplication 
of $j$-th line of $S_{q,\mu}$ to $j$-th column of $\Psi_{q,\mu}$ equals to 
(here $d_{-1} \equiv 0$)
$$
d^*_{q-j+1}  d_{q-j+1}\Phi_{q-j+1} + 0\cdot 0 + d_{q-j} d^*_{q-j} \Phi_{q-j+1}   = \Delta_{q-j+1}\Phi_{q-j+1} 
=I_{k_{q-j+1}}.
$$
It follows from \eqref{eq.deRham.1}, \eqref{eq.deRham.1mu},  that 
the multiplications 
of the other lines of $S_{q,\mu}$ to the other columns of $\Psi_{q,\mu}$ equal, obviously, to zero.

These calculations mean that $\Psi_{q,\mu} $ is a right fundamental solution to $S_{q,\mu}$. As 
formally adjoint to a right fundamental solution for a differential operator is a left fundamental solution 
to its formally adjoint, then 
 \begin{equation} \label{eq.RFR.const}
    S_{q,\mu} \Psi_{q,\mu} =I = 
   (\Psi_{q,\mu})^* S_{q,\mu}^*  =  (\Psi_{q,\mu})^* S_{q,\mu} ,
\end{equation}
because operator $S_{q,\mu}$ is formally self-adjoint.  
%

Finally, \eqref{eq.deRham.1} and the convolution type of the 
fundamental solutions $\Phi _{\Delta^p_q}$ yield  
\begin{equation} \label{eq.deRham.21}
d_q \Phi_q =  \Phi_{q+1} d_q , \, d^* _{q-1} \Phi_q =  \Phi_{q-1} d^*_{q-1} \mbox{ on } C^\infty_0 ({\mathbb R}^n,\Lambda^q),
\end{equation}
\begin{equation} \label{eq.deRham.22}
d^*_q d_q \Phi_{\Delta^2_q} = 
\Phi_{\Delta^2_q} d^*_q d_q  , \, d_{q-1}d^* _{q-1} \Phi_{\Delta^2_q} =  \Phi_{\Delta^2_q} 
 d_{q-1} d^*_{q-1} \mbox{ on } C^\infty_0 ({\mathbb R}^n,\Lambda^q),
\end{equation}
\begin{equation} \label{eq.deRham.3}
(\Phi_{\Delta^p_q} u,v)_{L^2 ({\mathbb R}^n,\Lambda^q)} =  ( u, \Phi_{\Delta^p_q} v)_{L^2 
({\mathbb R}^n,\Lambda^q)} \mbox{ for all } u,v\in C^\infty_0 ({\mathbb R}^n,\Lambda^q), \, p\in \mathbb N.
\end{equation}
Thus $(\Psi_{q,\mu})^* = \Psi_{q,\mu}$, and hence it is a bilateral fundamental solution for $S_{q,\mu}$.
\end{proof}

Let's discuss the construction of fundamental solutions to the generalized Stokes' operators 
with variable coefficients under reasonable assumptions. 

\begin{thm} \label{t.40} 
Let $q\geq 1$, $X$ be a bounded domain in ${\mathbb R}^n$, $ \Delta _{j,\mu} = 0$ for all $0\leq j \leq q-1$ and the self-adjoint 
matrices ${\mathcal M}_j, \tilde {\mathcal M}_j$, $j_0\leq j\leq q$, are positive and $C^\infty$-smooth on 
a neighbourhod of the compact 
$\overline X$. Then there is a bilateral fundamental solution $\Psi_{q,\mu}$ for  $S_{q,\mu}$ on $X$. 
Moreover if matrices  
     ${\mathcal M}_q$, 
$\widetilde {\mathcal M}_q$ are real analytic on 
a neighbourhod of the compact  $\overline X$, 
then $S_{q,\mu}$ admits a pseudo-differential (in particular, regular and continuable) bilateral fundamental solution on $X$.  
\end{thm}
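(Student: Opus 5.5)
The plan is to mimic the block structure of Theorem \ref{t.const}. The constant scalars $\lambda_q,\tilde\lambda_q$ will be replaced by fundamental solutions of two auxiliary scalar-type elliptic operators, one built from ${\mathcal M}_q$ and one from $\tilde{\mathcal M}_q$. Under the hypothesis $\Delta_{j,\mu}=0$ for $0\le j\le q-1$, only the block $\Delta_{q,\mu}=d_q^*{\mathcal M}_q d_q+d_{q-1}\tilde{\mathcal M}_q d^*_{q-1}$ carries variable coefficients. The lower tridiagonal part is exactly as in the constant case, so the entries $\Psi^{j,j+1}_{q,\mu}=d_{q-j}\Phi_{q-j}$, $\Psi^{j+1,j}_{q,\mu}=d^*_{q-j}\Phi_{q-j+1}$ for $j\ge 2$ and the zero entries \eqref{eq.q.geq.2.2} are kept unchanged. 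Only $\Psi^{1,1}$ and $\Psi^{2,2}$ are modified, and possibly $\Psi^{2,1}$.

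\textbf{Auxiliary operators.} Consider the fourth-order operator $A_q=d_q^*{\mathcal M}_q d_q\,\Delta_q+d_{q-1}d^*_{q-1}\Delta_q$ on $q$-forms. Its principal symbol is $|\zeta|^2\big(\sigma(d^*_q){\mathcal M}_q\sigma(d_q)+\sigma(d_{q-1})\sigma(d^*_{q-1})\big)$, which is positive definite because ${\mathcal M}_q>0$ and the symbols of $d,d^*$ are exact. Hence $A_q$ is strongly elliptic with smooth coefficients on a neighbourhood of $\overline X$. Similarly, the second-order operator $B_{q-1}=d^*_{q-1}\tilde{\mathcal M}_q d_{q-1}+d_{q-2}d^*_{q-2}$ on $(q-1)$-forms is strongly elliptic. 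By the classical Levi parametrix method, shrinking to the bounded domain $X$ (or using a Fredholm correction on a slightly larger ball), both operators admit bilateral fundamental solutions ${\mathcal E}_{A}$ and ${\mathcal E}_{B}$ on $X$.

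\textbf{Ansatz.} Set
\[
\Psi^{1,1}=d_q^*d_q\,\Phi_q\,{\mathcal E}_{A}\text{-type term},\qquad
\Psi^{1,2}=d_{q-1}{\mathcal E}_{B},\qquad
\Psi^{2,1}=d_{q-1}^*\,\Phi_q,\qquad
\Psi^{2,2}=-{\mathcal E}_{B}\,d^*_{q-1}\tilde{\mathcal M}_q d_{q-1}\cdots.
\]
These entries are to be fixed by repeating the row-by-column computation of Theorem \ref{t.const}. That computation uses the identities \eqref{eq.deRham.1}, \eqref{eq.deRham.1mu}, $d^2=0$, and $d_q\Delta_q=\Delta_{q+1}d_q$. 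The key relation replacing $\Delta_{q,\mu}d_q^*d_q=\lambda_q\Delta_qd_q^*d_q$ is $\Delta_{q,\mu}d_q^*=d_q^*{\mathcal M}_qd_qd_q^*$. The commutation of $\Phi$'s with $d$'s in \eqref{eq.deRham.21} holds only for the constant-coefficient pieces, so I would arrange every variable-coefficient factor to appear on a single side. This yields a right fundamental solution. Applying the same construction to the formal adjoint (which is $S_{q,\mu}$ itself) gives a left one, $\Psi^*$. A right inverse and a left inverse of the same operator must then coincide modulo the standard argument, and I would compose $\Psi^*S\Psi$ to conclude that they are bilateral.

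\textbf{Main obstacle and the analytic case.} The hard step is that, unlike the constant case, $\Psi$ is not automatically self-adjoint. Moreover, the Levi fundamental solutions on bounded $X$ exist only up to smoothing remainders, so the "bilateral" claim needs care. I would first build a parametrix of $S_{q,\mu}$ as above. I would then use the Douglis--Nirenberg ellipticity (\cite{ShPMi}) together with the unique continuation consequence of Proposition \ref{p.0} to kill the finite-dimensional kernel and cokernel on a slightly larger domain, thereby upgrading the parametrix to a genuine bilateral fundamental solution. In the real-analytic case, ${\mathcal E}_A$ and ${\mathcal E}_B$ can be taken as analytic pseudo-differential operators: for analytic elliptic operators, the Levi construction converges, and the kernels are analytic off the diagonal. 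The resulting $\Psi_{q,\mu}$ is then a pseudo-differential operator whose blocks are compositions of differential operators, $\Phi_j$, ${\mathcal E}_A$ and ${\mathcal E}_B$, which gives regularity and continuability.
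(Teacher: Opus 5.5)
There are two genuine gaps, and in each the paper supplies a concrete device that your proposal lacks.

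\textbf{The right fundamental solution.} This is the heart of the theorem, and it is never actually produced. The entries you do write down violate the row-by-column identities. For $q\ge 2$ the hard column is the second one. There you need $d_{q-1}\Psi^{2,2}=-\Delta_{q,\mu}\Psi^{1,2}$, which involves $\widetilde{\mathcal M}_q$, and at the same time $d^*_{q-2}\Psi^{2,2}=0$ (third row, since $\Delta_{q-2,\mu}=0$). Your entries fail in three ways:
\begin{itemize}
\item Your $\Psi^{2,2}=-{\mathcal E}_B(\cdots)$ has a variable-coefficient operator on the far left, so $d^*_{q-2}$ does not annihilate it.
\item Your $\Psi^{1,2}=d_{q-1}{\mathcal E}_B$, combined with the unchanged $\Psi^{3,2}=d^*_{q-2}\Phi_{q-1}$, gives $d^*_{q-1}d_{q-1}{\mathcal E}_B+d_{q-2}d^*_{q-2}\Phi_{q-1}$ in the second row. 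This is not $I$ in general.
\item The expression $d^*_{q-1}\widetilde{\mathcal M}_q d_{q-1}$ is ill-typed, because $\widetilde{\mathcal M}_q$ acts on $(q-1)$-forms.
\end{itemize}
The missing device is the paper's. Keep $\Psi^{1,2}=d_{q-1}\Phi_{q-1}$ and put the constant-coefficient factor $d^*_{q-1}d_{q-1}\Phi_{q-1}$ leftmost:
$\Psi^{2,2}=-d^*_{q-1}d_{q-1}\Phi_{q-1}\chi\widetilde{\mathcal M}_q d^*_{q-1}\Phi_q d_{q-1}$, and analogously in $\Psi^{2,1}$. Then $d^*_{q-2}d^*_{q-1}=0$ kills the third-row term. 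Meanwhile $d_{q-1}d^*_{q-1}d_{q-1}\Phi_{q-1}=d_{q-1}\Delta_{q-1}\Phi_{q-1}=d_{q-1}$ produces exactly the cancellation with $\Delta_{q,\mu}d_{q-1}\Phi_{q-1}=d_{q-1}\widetilde{\mathcal M}_q d^*_{q-1}\Phi_q d_{q-1}$ on $X$.

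Your $A_q$ could in fact handle the first column, with $\Psi^{1,1}=d^*_qd_q{\mathcal E}_A$ and $\Psi^{2,1}=d^*_{q-1}\Delta_q{\mathcal E}_A$. It does not work with $\Psi^{2,1}=d^*_{q-1}\Phi_q$ as you wrote it.

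You also need ${\mathcal E}_A$ and ${\mathcal E}_B$ on all of $X$, and your route does not give them. Levi's method yields fundamental solutions only locally, and you cannot shrink the given $X$. A Fredholm correction would require proving that the kernel is trivial. The paper obtains a global $\Phi_{q,\mu}$ as the Green function of the Dirichlet problem for $\Delta_{q,\mu}$ on a smooth $Y\supset\overline X$. Its kernel is trivial because $d_q u=0$ and $d^*_{q-1}u=0$ force harmonic coefficients vanishing on $\partial Y$.

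\textbf{The passage to a bilateral solution.} Your argument here fails.
\begin{itemize}
\item Right and left fundamental solutions on a bounded domain need not coincide. Adding a kernel $h(x,y)$ with $S_{q,\mu}h(\cdot,y)=0$ preserves the right property but generally destroys the left one.
\item Evaluating $\Psi^*S_{q,\mu}\Psi$ in two ways presupposes $\Psi^*S_{q,\mu}u=u$ for $u=\Psi\varphi$. That $u$ is not compactly supported, and for such $u$ the identity is false in general.
\item The unique-continuation route is unsupported. Proposition~\ref{p.0} is a regularity statement and gives no unique continuation for merely $C^\infty$ coefficients; unique continuation is not available in general for elliptic systems with such coefficients.
\end{itemize}
The paper needs neither. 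Set ${\mathcal A}=I-\Psi^{(r)}_{q,\mu}S_{q,\mu}$; then $S_{q,\mu}{\mathcal A}=0$. The operator $\Psi_{q,\mu}=\Psi^{(r)}_{q,\mu}+{\mathcal A}(\Psi^{(r)}_{q,\mu})^*$ satisfies $S_{q,\mu}\Psi_{q,\mu}=I$. It also satisfies $\Psi_{q,\mu}S_{q,\mu}=I-{\mathcal A}+{\mathcal A}(\Psi^{(r)}_{q,\mu})^*S_{q,\mu}=I$, because $(\Psi^{(r)}_{q,\mu})^*$ is a left fundamental solution.

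\textbf{The analytic case.} Convergence of the Levi series is again only local, so it does not give a pseudo-differential fundamental solution on all of $Y$. The paper gets one from Malgrange's theorem, with property (US) coming from the analyticity of solutions of $\Delta_{q,\mu}$. Properly supported compositions then keep $\Psi_{q,\mu}$ pseudo-differential.
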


\begin{proof} We may assume that ${\mathcal M}_q, \widetilde {\mathcal M}_q$  are positive and $C^\infty$-smooth on  
the closure $\overline Y$ of a domain  $Y$, containing   $\overline X$ and having 
$C^\infty$-smooth boundary. Then
strongly elliptic, formally non-negative and formally self-adjoint second order operator 
$$
\Delta_{q,\mu} = A_{q,\mu} ^* A_{q,\mu}, \,\, A_{q,\mu} = 
\left(
\begin{array}{lll} 
\sqrt{{\mathcal M}_q (x) } \, d_q \\ 
\sqrt{\tilde {\mathcal M}_q (x)}  \, d^*_{q-1}
\end{array}
\right),
$$ 
where 
$\sqrt{{\mathcal M}}$ is the self-adjoint positive square root of a self-adjoint positive matrix, 
has a bilateral fundamental solution on $Y$. Indeed, consider the Dirichlet problem: 
given $f\in H^{-1} (Y,\Lambda^q)$ find $u  \in H^{1}_0 (Y,\Lambda^q)$ such that 
\begin{equation} \label{eq.Dir}
({\mathcal M}_q d_q u, d_q v)_{L^2 (X, \Lambda^{q+1})}+ (\tilde {\mathcal M}_{q} d^*_{q-1} u, 
d^*_{q-1} v)_{L^2 (X, \Lambda^{q-1})} \mbox{ for all } v \in H^{1}_0 (X,\Lambda^q),
\end{equation}
where $H^{1} (X)$ is the Sobolev space on $X$, $H^{1}_0 (X)$ is the closure of 
$C^\infty_0 (X)$ in  $H^{1} (X)$ and $H^{-1} (X)$ is the dual to $H^{1}_0 (X)$. 
By classical arguments, the problem is formally self-adjoint, formally non-negative and 
has the Fredholm property. Then it admits the Hodge parametrix, see, for instance, \cite[Ch. 10]{WRL}, \cite{SShTa}.
The Green function is granted, if the space $S_{A_{q,\mu}} (Y)\cap H^1_0 (Y,\Lambda^q)$ of solutions to the related homogeneous problem is trivial. 
However, under the assumtion on matrices ${\mathcal M}_q, \widetilde {\mathcal M}_q$ any element  $u$ of this space 
satisfies $d_q u =0$ and  
$d^*_{q-1} u =0$ in $Y$, and hence, according to \eqref{eq.Laplace.0}, \eqref{eq.Laplace.1}, its coefficients are 
harmonic in $Y$. Therefore the space is trivial becsuse of the Uniqueness Theorem 
for the Dirichlet Problem for the Laplace operator. Thus, we may take the Green function of the Dirichlet 
\eqref{eq.Dir} as the bilateral fundamental solution to the operator $\Delta_{q,\mu}$ on $Y$.  

Now, if   $\Phi_{q,\mu}$ is the bilateral fundamental solution to the operator $\Delta_{q,\mu}$ on a neighbourhood $Y$ 
of the compact  $\overline X$, and $\chi \in C^{\infty}_0 (Y)$ is a function that equals to $1$ a neighbourhood  
of the compact  $\overline X$, then the Schwartz kernels of the operators  
$\chi \Phi_ {q,\mu}$, 
$\chi \widetilde {\mathcal M}_q d^*_{q-1}\Phi_{q}$ and $ \chi  
\widetilde {\mathcal M}_q d_{q-1}^* \Phi_{q,\mu}$ are properly supported on $Y$. Then the compositions    
$\Phi_ {q,\mu} \chi \Phi_ {q,\mu} $, $ \Phi_{q-1} 
\chi \widetilde {\mathcal M}_q d_{q-1}^* \Phi_{q}$ 
and $ \Phi_{q-1} 
\chi \widetilde {\mathcal M}_q d_{q-1}^* \Phi_{q,\mu}$, used in the proof of the theorem 
are well-defined on $Y$, Moreover the interior a priori estimates for the elliptic differential 
operators $\Delta_q$ and $\Delta_{q,\mu}$ imply that   
 the compositions map  
$C_0^\infty (X, \Lambda^q)$ to 
$C^\infty (Y, \Lambda^q)$. Besides, as 
 $\chi \in C^{\infty}_0 (Y)$, then 
\begin{equation} \label{eq.chi}
\Delta_{q,\mu} \Phi _{q,\mu} \chi  = \chi  
\mbox{ on } C^\infty (Y, \Lambda^q), \,\, 
\Delta_{q} \Phi _{q} \chi = \chi \mbox{ on } C^\infty  (Y, \Lambda^q).
\end{equation} 

\begin{rem} \label{r.chi}
To continue the proof we note the following 

1)
all the operator identities in the proof of Lemma \ref{l.3} and Theorem \ref{t.40} are verified on  $X$, i.e., 
we calculate 
$$
\langle \Psi^{(r)}_{q,\mu} S_{q,\mu} \varphi ,  \psi\rangle, 
\langle S_{q,\mu}  \Psi_{q,\mu} ^{(r)}\varphi ,  \psi\rangle \mbox{  for } \varphi, \psi \in 
{\mathbf C}_{0,q}^\infty (X)  = \oplus_{j=0}^q C^\infty_{0} (X,\Lambda^j);
$$
 
2) the supports of the expressions, containing derivatives of the function $\chi$,  lie outside of  $\overline  X$. \end{rem}

\begin{lem} \label{l.3} 
Under the hypothesis of Theorem \ref{t.40}, let    
 $\Phi_{q,\mu}$ be a bilateral fundamental solution of the operator 
$\Delta_{q,\mu}$ in a neighbourhood $Y$ of the compact $\overline X$, and $\chi \in C^{\infty}_0 (Y)$.
Then

1) one of the right fundamental solutions to the operator $S_{q,\mu}$ on $X$ is given by  
three-diagonal $((q+1)\times (q+1))$-block matrix  $\Psi^{(r)}_{q,\mu}$ with the following  
block components:
$$
\Psi_{q,\mu} ^{(r),1,1} = d_q^* {\mathcal M}_q d_q \Phi_{q,\mu} \chi  \Phi_{q,\mu}, \, 
\Psi_{q,\mu} ^{(r),1,2}  = d_{q-1} \Phi_{q-1}, \, q\geq 1, 
$$
$$
\Psi_{q,\mu} ^{(r),2,1} =
\widetilde {\mathcal M}_q d_{q-1}^* \Phi_{q,\mu} , \, 
\Psi_{q,\mu} ^{(r),2,2} = -
 \widetilde {\mathcal M}_q , \, q=1,
$$
 and, for $q\geq 2$,
$$
\Psi_{q,\mu} ^{(r),2,1} =
d_{q-1}^*
d_{q-1} \Phi_{q-1}  \chi  
\widetilde {\mathcal M}_q d_{q-1}^* \Phi_{q,\mu} , \, 
\Psi_{q,\mu} ^{(r),2,2} =
- 
d_{q-1}^*
d_{q-1}  \Phi_{q-1} 
\chi \widetilde {\mathcal M}_q  d_{q-1}^* 
\Phi_{q} d_{q-1}, 
$$
all the other components of the matrix  $\Psi^{(r)}_{q,\mu}$ are given by formulae \eqref{eq.q.geq.2.1},  
\eqref{eq.q.geq.2.2};

2)  $\Psi^{(l)}_{q,\mu} = 
(\Psi^{(r)}_{q,\mu})^*$ is a left fundamental solution to  
 $S_{q,\mu}$. 
\end{lem}

\begin{proof} 
If $q=1$, then, by    
 \eqref{eq.deRham.1mu},  
\eqref{eq.chi} and Remark \ref{r.chi}, we obtain on $X$: 
$$
\Delta_{1,\mu} d_1^* {\mathcal M}_1 d_1 \Phi_{1,\mu}  \chi \Phi_{1,\mu}  + d_0 
\widetilde {\mathcal M}_1 d_0^* \Phi_{1,\mu} = d_1^* {\mathcal M}_1 d_1 \chi \Phi_{1,\mu} + d_0
\widetilde {\mathcal M}_1 d_0^* \Phi_{1,\mu}  
= I, $$
$$
d^*_0 d_0 \Phi_{0}  = I, \, 
d_0^*  d_1^* {\mathcal M}_1 d_1 \Phi_{1,\mu} \chi \Phi_{1,\mu} =  0,
$$
$$
\Delta _{1,\mu} 
 d_0 \Phi_{0} 
 -  d_0 \widetilde {\mathcal M}_1  = d_0 \widetilde {\mathcal M}_1 d^*_0 d_0 \Phi_{0} 
- d_0 \widetilde {\mathcal M}_1 = d_0 \widetilde {\mathcal M}_1  - d_0 \widetilde {\mathcal M}_1  = 0.
$$
Thus  $\Psi_{1,\mu}^{(r)} $ is a right fundamental solution to  $S_{1,\mu}$ on $X$.

In order to prove the statement of the lemma for $q\geq 2$, we have to repeat 
the related part of the proof of Theorem \ref{t.const}. However iin this situation 
the components of  $\Psi_{q,\mu} ^{(r)}$ coincide with the components 
of the matrix $\Psi_{q,\mu}$ from Theorem  \ref{t.const}  for all the pairs $(i,j)$, except 
$(1,1)$, $(2,1)$ and $(2,2)$. Therefore taking into the accpunt the structure of Stokes' operators, 
we have to multiply  the first three lines 
of $S_{q,\mu}$ on the first two columns of the matrix  $\Psi_{q,\mu}$. 

Thus, \eqref{eq.deRham.1}, 
\eqref{eq.deRham.1mu}, \eqref{eq.deRham.21}, 
\eqref{eq.chi} and Remark \ref{r.chi}  yield that the multiplication of the first line 
of $S_{q,\mu}$ on the first two columns of $\Psi^{(r)}_{q,\mu}$ gives us on  $X$: 
$$
\Delta_{q,\mu} \Psi_{q,\mu} ^{(r),1,1} + d_{q-1} \Psi_{q,\mu} ^{(r),2,1} + 0\cdot 0 = 
d_q^* {\mathcal M}_q d_q   \chi \Phi_{q,\mu} +  
d_{q-1} 
\Delta_{q-1} \Phi_{q-1} 
\chi \widetilde {\mathcal M}_q d_{q-1}^* \Phi_{q,\mu} = 
$$
$$
d_q^* {\mathcal M}_q d_q   \chi\Phi_{q,\mu} +  
d_{q-1}  \chi \widetilde {\mathcal M}_q d_{q-1}^* \Phi_{q,\mu} = 
\Delta_{q,\mu} \Phi_{q,\mu} =I_{k_q}, 
$$
$$
\Delta_{q,\mu}  \Psi_{q,\mu} ^{(r),1,2} +   d_{q-1} \Psi_{q,\mu} ^{(r),2,2} + 
0\cdot \Phi_{q-2}  d ^{*}_{q-2} + 0 \cdot 0 = 
$$
$$
d_{q-1} \widetilde {\mathcal M}_q d_{q-1}^* d_{q-1}  \Phi_{q-1} - 
 d_{q-1} (d_{q-1}^*
 d_{q-1} \Phi_{q-1}  
\chi  \widetilde {\mathcal M}_q  d_{q-1}^* \Phi_{q} d_{q-1} ) + 
0\cdot \Phi_{q-2}  d ^{*}_{q-2} = 
$$
$$
d_{q-1} \widetilde {\mathcal M}_q d_{q-1}^* d_{q-1}  \Phi_{q-1} - d_{q-1} \Delta_{q-1}\Phi_{q-1} 
\chi \widetilde {\mathcal M}_q  d_{q-1}^* \Phi_{q} d_{q-1}  =  
$$
$$
d_{q-1} \widetilde {\mathcal M}_q d_{q-1}^*  \Phi_{q} d_{q-1}    - d_{q-1} \chi \widetilde {\mathcal M}_q d_{q-1}^* \Phi_{q}  d_{q-1}  =0. 
$$
For the same reasons, the multiplication of the second line of 
$S_{q,\mu}$ on the first two columns of  
$\Psi^{(r)}_{q,\mu}$ gives us on $X$:
$$
 d^*_{q-1} \Psi_{q} ^{(r),1,1}  + 0 \cdot \Psi_{q} ^{(r),2,1}+ 0 \cdot 0  = 
 d^*_{q-1}  d_q^* {\mathcal M}_q d_q \Phi_{q,\mu}  \chi \Phi_{q,\mu} =0 , 
$$
$$
d^*_{q-1}  \Psi_{q} ^{(r),1,2} + 0\cdot \Psi_{q} ^{(r),2,2} + d_{q-2}   d^*_{q-2} \Phi_{q-1} + 0\cdot 0 
= \Delta_{q-1}\Phi_{q-1} =I_{k_{q-1}}.
$$
Finally,  the multiplication of the third line 
of $S_{q,\mu}$ on the first two columns of  
$\Psi^{(r)}_{q,\mu}$ gives us 
$$
0\cdot \Psi_{q,\mu} ^{(r),1,1} + d^*_{q-2} \Psi_{q,\mu} ^{(r),2,1}  + 0 \cdot 0   + d_{q-2} \cdot 0= 
 d^*_{q-2} d_{q-1}^* 
 d_{q-1} \Phi_{q-1} \chi  \widetilde {\mathcal M}_q d_{q-1}^* \Phi_{q,\mu} = 0,
$$
$$ 
0 \cdot \Psi_{q,\mu} ^{(r),1,2}+ d^*_{q-2} \Psi_{q,\mu} ^{(r),2,2} + 
0 \cdot \Phi_{q-2} d^*_{q-2} + 0 \cdot 0  =
$$
$$ 
d^*_{q-2} (- 
d_{q-1}^*
d_{q-1}  \Phi_{q-1} 
\chi \widetilde {\mathcal M}_q  d_{q-1}^* 
\Phi_{q} d_{q-1})
 = 0.
$$
This calculations mean that $\Psi_{q,\mu}^{(r)} $ is a right fundamental solution to 
the operator $S_{q,\mu}$. As we have seen in the proof of 
Theorem \ref{t.const}, 
  \begin{equation} \label{eq.RFR}
    S_{q,\mu} \Psi_{q,\mu}^{(r)} =I = 
   (\Psi^{(r)}_{q,\mu})^* S_{q,\mu}^*  = \Psi^{(l)}_{q,\mu} S_{q,\mu} ,
\end{equation}
because $S_{q,\mu}$ is formally self-adjoint. The lemma is proved. 
\end{proof}

\begin{rem} \label{r.commute}
A simpler right fundamental solution to $S_{q,\mu}$ was indicated on the symbolic level in \cite[Theorem 10]{ShPMi}
under the following assumption:
\begin{equation} \label{eq.commute}
d_{q-1} \tilde{\mathcal M}_q d_{q-2} \equiv 0 .
\end{equation}
Assumption \eqref{eq.commute} allows to consider matrices $\tilde {\mathcal M}_{q}$  with smooth 
entries, too. Indeed, if we treat the term $\tilde {\mathcal M}_{q} d _{q-2} u$ as in 
\eqref{eq.multip}, then 
$$
d _{q-1} \tilde {\mathcal M}_{q} d _{q-2}u = d_{q-1}\sum_{\sharp I = q-1} (\star (d_{q-2}u \wedge \star \tilde 
{\mathcal M}^{(I)}_q ))dx_I = 
$$
$$
\sum_{j=1}^n \sum_{\sharp I = q-1} \sum_{\sharp J = q-1}  (d_{q-2}u)_J  \frac{\partial \tilde {\mathcal M}^{(J)}_{q,I} }{\partial x_j} dx_j \wedge dx_I  +
$$
$$
\sum_{j=1}^n \sum_{\sharp I = q-1} \sum_{\sharp J = q-1}   \frac{\partial 
(d_{q-2}u)_J }{\partial x_j} \tilde  {\mathcal M}^{(I)}_J  dx_j \wedge dx_I = 
$$
$$
\sum_{\sharp J = q-1}   (d_{q-2}u)_J  d_{q-1}\tilde {\mathcal M}^{(J)}_q  + 
\tilde  {\mathcal M} (d_{q-1} (d_{q-2}u))  =\sum_{\sharp J = q-1}   (d_{q-2}u)_J  d_{q-1}\tilde {\mathcal M}^{(J)}_q.
$$
Thus,  \eqref{eq.commute} is fulfilled 
if only if 
 the forms $\tilde {\mathcal M}^{(J)}_q $ are closed in $X$ 
 for all  $J$  with  $\sharp J = q-1$.
\end{rem}

In order to construct a two-sided fundamental solution $\Psi_{q,\mu}$ to $S_{q,\mu}$ we have to find 
a smoothing operator $H$, such that 
\begin{equation}\label{eq.bilateral.1q}
S_{q,\mu} (\Psi_{q,\mu}^{(r)} + H) = I, \,\,  (\Psi_{q,\mu}^{(r)} + H)  S_{q,\mu} = I. 
\end{equation}
With this purpose we again envoke Remark   \ref{r.chi}.

If $q=1$ then 
\begin{equation}\label{eq.bilateral.2}
\Psi_{1,\mu}^{(r)}  S_{1,\mu} = 
\left( 
\begin{array}{lll}
d_1^* {\mathcal M}_1 d_1  \Phi_{1,\mu} \chi + d_0 \Phi_0 d_0^* & 
 d_1^* {\mathcal M}_1 d_1 \Phi_{1,\mu} 
 \chi\Phi_{1,\mu} d_0 \\
0 &  \widetilde {\mathcal M}_1 d_0^* \Phi_{1,\mu} d_0\\
\end{array}
\right) = I- {\mathcal A}_{1,\mu} , 
\end{equation} 
where 
\begin{equation} \label{eq.A1}
{\mathcal A}_{1,\mu} = 
\left( 
\begin{array}{lll} 
d_1^* d_1  \Phi_1 - d_1^* {\mathcal M}_1 d_1  \Phi_{1,\mu} & 
 - d_1^* {\mathcal M}_1 d_1 \Phi_{1,\mu} \chi\Phi_{1,\mu} d_0\\
0 & d^*_0d_0 \Phi_0 - \widetilde {\mathcal M}_1 d_0^* \Phi_{1,\mu} d_0 \\
\end{array}
\right), 
\end{equation}
because the multiplication on the function  $\chi$ acts as the identity operator on the space of functions 
with compact supports in $X$.

On the other hand, by 
\eqref{eq.deRham.1}, \eqref{eq.deRham.1mu}, \eqref{eq.chi}, we obtain on $X$ the components 
of the matrix  $S_{1,\mu} {\mathcal A}_{1,\mu}$:
\begin{equation}\label{eq.bilateral.2.11}
\begin{array}{ccccc}
\Delta_{1,\mu} a_{11} + d_{0} a_{21} = d_1^* {\mathcal M}_1 d_1 \Delta_1 \Phi_1  - d_1^* {\mathcal M}_1 d_1 =0 , \\ [.1cm]
 \Delta_{1,\mu} a_{12} + d_{0} a_{22} = 
-\Delta_{1,\mu} d_1^* {\mathcal M}_1 d_1  \Phi_{1,\mu} \chi \Phi_{1,\mu} d_0 + d_0 
(d^*_0 d_0 \Phi_0 - \widetilde {\mathcal M}_1 d_0^* \Phi_{1,\mu} d_0 ) = \\[.1cm]
 - d_1^* {\mathcal M}_1 d_1  \chi \Phi_{1,\mu} d_0 - d_0\widetilde {\mathcal M}_1 d_0^* \Phi_{1,\mu} d_0 + d_0 
 = -d_0 + d_0 = 0,\\[.1cm]
d_{0}^* a_{11} + 0 \cdot a_{21} =  d_0^ *(d_1^*  d_1 \Phi_1  - d_1^* {\mathcal M}_1 d_1  \Phi_{1,\mu} ) + 0 \cdot 0 =0,\\[.1cm]
d_{0}^* a_{12} + 0 \cdot a_{22} = 
d_0^*  (- d_1^* {\mathcal M}_1 d_1 \Phi_{1,\mu} \chi \Phi_{1,\mu} d_0) + 0 \cdot (d^*_0d_0 \Phi _0 - \widetilde {\mathcal M}_1 d_0^* \Phi_{1,\mu} d_0)= 0,\\[.1cm]
	\end{array}
\end{equation}
where  $a_{ij}$  arethe components 
of the matrix ${\mathcal A}_{1,\mu}$. 
Besides, according to 
\eqref{eq.deRham.0}, 
\eqref{eq.deRham.1} on $X$ we have additional relations for  $a_{ij}$, that we will use later:
\begin{equation}\label{eq.bilateral.2.12}
\begin{array}{ccccc} 
a_{11} \Delta_{1,\mu}  = (d_1^*  d_1 \Phi_1  - d_1^* {\mathcal M}_1 d_1  \Phi_{1,\mu} )\Delta_{1,\mu} =
 \Delta_1 \Phi_1  d^*_1 {\mathcal M}_1 d_1  - d_1^* {\mathcal M}_1 d_1 =0, \\[.1cm]
 d_0 \widetilde {\mathcal M}_1 \Delta_0 = d_0 \widetilde {\mathcal M}_1 d^*_0d_0 = 
\Delta_{1,\mu} d_0, \\[.1cm]
- a_{12}(\widetilde {\mathcal M}_1 \Delta_0)^2 =   
d_1^* {\mathcal M}_1 d_1 \Phi_{1,\mu} \chi \Phi_{1,\mu} d_0 (\widetilde {\mathcal M}_1 \Delta_0)^2 = 
d_1^* {\mathcal M}_1 d_1 \Phi_{1,\mu} \chi  \Phi_{1,\mu} \Delta_{1,\mu}^2 d_0  = 0, \\[.1cm]
a_{22}\widetilde {\mathcal M}_1 \Delta_0 = 
(d^*_0d_0 \Phi_0 - \widetilde {\mathcal M}_1 d_0^* \Phi_{1,\mu} d_0) \widetilde {\mathcal M}_1 \Delta_0 = 
\widetilde {\mathcal M}_1 (\Delta_0  -  d_0^* \Phi_{1,\mu}  \Delta_{1,\mu}  d_0 ) 
=0.
	\end{array}
\end{equation}

If $q\geq 2$ then, by \eqref{eq.deRham.1}, 
\eqref{eq.deRham.1mu}, 
\eqref{eq.deRham.21}, we have 
$\Psi_{q,\mu}^{(r)}  S_{q,\mu} = {\mathcal B}_{q,\mu} $, where the block-entries $b_{ij}$ 
of the block-matrix ${\mathcal B}_{q,\mu}$ are as follows:
\begin{equation*} 
b_{11} =  \Psi_{q,\mu}^{(r),1,1}  \Delta_{q,\mu} + d_{q-1} \Phi_{q-1} d_{q-1}^* \, 
=
 d_q^* {\mathcal M}_q d_q  \Phi_{q,\mu}  + d_{q-1} \Phi_{q-1} d_{q-1}^* , 
\end{equation*} 
\begin{equation*} 
b_{11} =  \Psi_{q,\mu}^{(r),1,1}  \Delta_{q,\mu} +\Psi_{q,\mu}^{(r),1,2} d_{q-1}^* \, 
=
 d_q^* {\mathcal M}_q d_q  \Phi_{q,\mu} \chi + d_{q-1} \Phi_{q-1} d_{q-1}^* , 
\end{equation*} 
\begin{equation*} 
b_{12} = \Psi_{q,\mu}^{(r),1,1} d_{q-1} + \Psi_{q,\mu}^{(r),1,2} \cdot 0 + 0 \cdot 0   
=d_q^* {\mathcal M}_q d_q  \Phi_{q,\mu} \chi  \Phi_{q,\mu} d_{q-1},  
\end{equation*} 
$$
 b_{13} =  \Psi_{q,\mu}^{(r),1,1} \cdot 0 + d_{q-1} \Phi_{q-1} d_{q-2} + 0 \cdot d^*_{q-1} + 0 \cdot 0 
 = 0,  
$$
$$
b_{21} = \Psi_{q,\mu}^{(r),2,1} \Delta_{q,\mu} +  \Psi_{q,\mu}^{(r),2,2} d^*_{q-1} + 0 \cdot 0= 
$$
$$
d_{q-1}^* d_{q-1} \Phi_{q-1} 
\chi\widetilde {\mathcal M}_q d_{q-1}^* \Phi_{q,\mu} 
\Delta_{q,\mu}  - d_{q-1}^* d_{q-1}\Phi_{q-1}  
\chi \widetilde {\mathcal M}_q d_{q-1}^* d_{q-1} \Phi_{q-1}
   d^*_{q-1} =
$$
$$
d_{q-1}^* d_{q-1} \Phi_{q-1} 
\chi\widetilde {\mathcal M}_q d_{q-1}^*   - d_{q-1}^* d_{q-1} \Phi_{q-1} 
\chi \widetilde {\mathcal M}_q d_{q-1}^* =0, 
$$
$$ b_{22} = \Psi_{q,\mu}^{(r),2,1} d_{q-1}+ 0 \cdot \Psi_{q,\mu}^{(r),2,2} + d_{q-2} \Phi_{q-2} d_{q-2}^* =  
$$
$$
d_{q-1}^* d_{q-1} \Phi_{q-1} 
\chi 
\widetilde {\mathcal M}_q d_{q-1}^* \Phi_{q,\mu} d_{q-1} + 
d_{q-2} \Phi_{q-2} d_{q-2}^*,  
$$
$$
b_{23} =  \Psi_{q,\mu}^{(r),2,1} \cdot 0 +  \Psi_{q,\mu}^{(r),2,2}d_{q-2} + d_{q-2}\Phi_{q-2} \cdot 0 + 0
\cdot  d_{q-1}^* + 0 \cdot 0 =   
$$
$$
- d_{q-1}^* d_{q-1}
\Phi_{q-1}
\chi \widetilde {\mathcal M}_q d_{q-1}^*\Phi_{q} d_{q-1} d_{q-2} =0, \,
$$
$$
 b_{j,j} = I_{k_{q-j+1}}, 3\leq j \leq q+1,   \, 
b_{i,j} = 0, 3\leq i,j \leq q+1, i\ne j.
$$
or, in other form, induced by \eqref{eq.deRham.0} and \eqref{eq.deRham.21}, 
\begin{equation}\label{eq.bilateral.2q} 
\Psi_{q,\mu}^{(r)}  S_{q,\mu} = I- {\mathcal A}_{q,\mu}
\end{equation}
where  the block-entries $a_{ij}$ 
of the block-matrix ${\mathcal A}_{q,\mu}$ are given by
\begin{equation} \label{eq.Aq}
a_{11} =
d_q^* d_q  \Phi_q - d_q^* {\mathcal M}_q d_q  \Phi_{q,\mu}, \, 
a_{12} =   -d_q^* {\mathcal M}_q d_q  \Phi_{q,\mu} \chi \Phi_{q,\mu} d_{q-1}, \, a_{13} =0, 
\end{equation}
$$
 a_{21} = 0, \, a_{22} = d^*_{q-1}d_{q-1} \Phi_{q-1} - 
d_{q-1}^* d_{q-1} \Phi_{q-1} 
\chi 
\widetilde {\mathcal M}_q d_{q-1}^* \Phi_{q,\mu} d_{q-1} ,
\, a_{23} =0,
$$
$$
a_{i,j} = 0, 3\leq i,j \leq q+1,
$$
because 
the multiplication on the function  $\chi$ acts as the identity operator on the space of functions 
with compact supports in $X$.

On the the other hand, 
according to \eqref{eq.deRham.1}, 
\eqref{eq.deRham.1mu} 
and Remark \ref{r.chi}, we obtain on $X$ the components of the matrix $S_{q,\mu} {\mathcal A}_{q,\mu}$:   
\begin{equation}\label{eq.bilateral.2q.11}
\begin{array}{ccccc}
\Delta_{q,\mu} a_{11} + d_{q-1} a_{21} =
\\ [.1cm]
\Delta_{q,\mu} (d_q^*  d_q \Phi_q  - d_q^* {\mathcal M}_q d_q  \Phi_{q,\mu} ) + d_{q-1} \cdot 0 =  
 d_q^* {\mathcal M}_q d_q \Delta_q \Phi_q  - d_q^* {\mathcal M}_q d_q =0 , \\ [.1cm]
d_{q-1} d_{q-1}^* d_{q-1} \Phi_{q-1} =d_{q-1} d^*_{q-1} d_{q-1} \Phi_{q-1} =d_{q-1}\Delta_{q-1} 
 \Phi_{q-1} = d_{q-1}, \\ [.1cm]
\Delta_{q,\mu} a_{12} + d_{q-1} a_{22} =
-\Delta_{q,\mu} d_q^* {\mathcal M}_q d_q  \Phi_{q,\mu} \chi\Phi_{q,\mu} d_{q-1} + \\ [.1cm]
d_{q-1} 
(d^*_{q-1} d_{q-1} \Phi_{q-1} - d_{q-1}^* d_{q-1} \Phi_{q-1} \chi  \widetilde {\mathcal M}_q d_{q-1}^* \Phi_{q,\mu} d_{q-1} ) = 
\\ [.1cm]
 - d_q^* {\mathcal M}_q d_q   \Phi_{q,\mu} d_{q-1} - 
d_{q-1}
\chi  
\widetilde {\mathcal M}_q d_{q-1}^* \Phi_{q,\mu} 
d_{q-1} + d_{q-1} = \\ [.1cm]
 = - \Delta_{q,\mu}  \Phi_{q,\mu}  d_{q-1} + d_{q-1} = 0.\\ [.1cm]
d_{q-1}^* a_{11} + 0 \cdot a_{21} = 
 d_{q-1}^ *(d_q^*  d_q \Phi_q  - d_q^* {\mathcal M}_q d_q  \Phi_{q,\mu} ) + 0 \cdot 0 =0,\\[.1cm]
d_{q-1}^* a_{12} + 0 \cdot a_{22} = 
d_{q-1}^* (-d_q^* {\mathcal M}_q d_q  \Phi_{q,\mu} \chi \Phi_{q,\mu} d_{q-1}) = 0.\\ [.1cm]
	\end{array}
\end{equation}
Then, taking into the account \eqref{eq.deRham.21}, we obtain on $X$ additional relations 
for  $a_{ij}$: 
\begin{equation}\label{eq.bilateral.2q.12}
\begin{array}{ccccc} 
a_{11} \Delta_{q,\mu}  = (d_q^*  d_q \Phi_q  - d_q^* {\mathcal M}_q d_q  \Phi_{q,\mu} )\Delta_{q,\mu} =
\Phi_q \Delta_q d^*_q {\mathcal M}_q d_q  - d_q^* {\mathcal M}_q d_{q} =0, \\ [.1cm]
d_{q-1} (\widetilde {\mathcal M}_q d^*_{q-1} d_{q-1}+ d_{q-2}d^*_{q-2})  = 
d_{q-1} \widetilde {\mathcal M}_q d^*_{q-1}d_{q-1}   
= \Delta_{q,\mu} d_{q-1}, 
\\ [.1cm]
d_{q-1} (\widetilde {\mathcal M}_q d^*_{q-1} d_{q-1}+ d_{q-2}d^*_{q-2})^2 = 
\Delta_{q,\mu} \Delta_{q,\mu} d_{q-1},\\ [.1cm]
- a_{12}(\widetilde {\mathcal M}_q d^*_{q-1} d_{q-1}+ d_{q-2}d^*_{q-2})^2 = 
\\ [.1cm]
d_q^* {\mathcal M}_q d_q  \Phi_{q,\mu} \chi
\Phi_{q,\mu} 
d_{q-1} (\widetilde {\mathcal M}_q d^*_{q-1} d_{q-1}+ d_{q-2}d^*_{q-2})^2 
= d_q^* {\mathcal M}_q d_q   d_{q-1} =0,\\ [.1cm]
a_{22} (\widetilde {\mathcal M}_{q} d_{q-1}^* d_{q-1} + d_{q-2}d^*_{q-2})= 
\\ [.1cm]
(d^*_{q-1}d_{q-1} \Phi_{q-1} - 
d_{q-1}^*d_{q-1} \Phi_{q-1} \chi\widetilde {\mathcal M}_q d_{q-1}^* \Phi_{q,\mu} d_{q-1}) 
(\widetilde {\mathcal M}_q d_{q-1}^* d_{q-1} + d_{q-2}d^*_{q-2}) = 
\\ [.1cm]
d^*_{q-1}d_{q-1} \Phi_{q-1} \widetilde {\mathcal M}_q d_{q-1} ^* d_{q-1} - 
d_{q-1}^* d_{q-1} \Phi_{q-1}\chi  \widetilde {\mathcal M}_q d_{q-1}^*d_{q-1} = 0.
	\end{array}
\end{equation}
again, because 
the multiplication on the function  $\chi$ acts as the identity operator on the space of functions 
with compact supports in $X$.

Now, according to formula   
\eqref {eq.bilateral.Aq}
 and the proof of Proposition \ref{p.0}, each block of the matrix 
${\mathcal A}_{q,\mu}$ is annihilated from the left by an (Petrovskii) elliptic operator with $C^\infty$-smooth 
coefficients. As the matrix $\widetilde {\mathcal M}_q$ is not-degenerate  then the operators  
$$
(\widetilde {\mathcal M}_q d^*_{q-1} d_{q-1}+ d_{q-2}d^*_{q-2}), \, 
(\widetilde {\mathcal M}_q d^*_{q-1} d_{q-1}+ d_{q-2}d^*_{q-2})^2, \, 
\Delta_{q,\mu}, \, q\geq 1,
$$
(Petrovskii) elliptic (here, as before, $d_{-1} \equiv 0$), then formulae  
 \eqref{eq.bilateral.2.12}, \eqref{eq.bilateral.2q.12} mean that each block of the matrix 
${\mathcal A}_{q,\mu}$  annihilates from the left by an (Petrovskii) elliptic operator with $C^\infty$-smooth 
coefficients.  Thus, these ellipic equations with respect to the variables 
 $x$ and $y$ provides the smoothness of the Schwartz kernel of the 
operator  ${\mathcal A}_{q,\mu}$  on  $X\times X$, 
i.e.  
the operator ${\mathcal A}_{q,\mu}$ is smoothing on  $X$.  

Therefore \eqref{eq.bilateral.2.11} and  \eqref{eq.bilateral.2q.11} imply
\begin{equation}\label{eq.bilateral.Aq}
S_{q,\mu} A_{q,\mu} \equiv 0 \mbox{ on } {\mathbf C}_q^\infty (D), \, q\geq 1. 
\end{equation}

Hence, formulae \eqref{eq.bilateral.2}, \eqref{eq.bilateral.2q}, imply that   \eqref{eq.bilateral.1q} is equivalent to 
\begin{equation}\label{eq.bilateral.4q}
S_{q,\mu} H = 0, \,\,   S_{q,\mu}  H ^* = {\mathcal A}^*_{q,\mu} , \, q\geq 1. 
\end{equation}
Therefore we may set $H = {\mathcal A}_{q,\mu} (\Psi_{q,\mu}^{(r)})^*$ that satisfies 
\eqref{eq.bilateral.4q} because $\Psi_{q,\mu}^{(r)}$ is a right fundamental 
solution to $S_{q,\mu}$, $q \geq 1$, and identity 
\eqref{eq.bilateral.Aq} holds true.


Finally, assume additionally that the coefficients of the operator $\Delta_{q,\mu}$ are real analytic and
recall that a differential operator  ${\mathfrak  A}$ has the Uniqueness Property 
in the small on $Y$ (or Property (A) from the paper by B. Malgrange \cite[page 334]{Mal56}), if  
\begin{itemize}
    \item[(US)] for any domain  $D$ from $Y$ a solution $u \in S_{{\mathfrak A}} (D)$ is identically zero, if 
 $u =0$ on an open subset  $G\subset U$.
\end{itemize} 
Then, By Petrovski Theorem,  
solutions to the operator $\Delta_{q,\mu}$ are real analytic and hence 
it satisfies the Uniqueness Property (US) on 
$Y$. Therefore   
$\Delta_{q,\mu}$ has a bilateral pseudo-differential 
fundamental solution  
$\Phi_{q,\mu}$ on $Y$ in this case, see. \cite[p. 349]{Mal56}, or, for instance,  \cite[Theorem 4.4.3 and Example 4.4.5]{Tark36}. 
As the Schwartz kernels of the operators  
$\chi \Phi_ {q,\mu}$, 
$\chi \widetilde {\mathcal M}_q d^*_{q-1}\Phi_{q}$ and $ \chi  
\widetilde {\mathcal M}_q d_{q-1}^* \Phi_{q,\mu}$, used in the construction 
of the fundamental solution $\Psi_{q,\mu}$ for $S_{q,\mu}$ are properly supperted on $Y$, then, by the well known 
theorem on the composition of pseudo-differential operators, see, for instance, \cite[Proposition 1.5.9]{Tark35},  
$\Psi_{q,\mu}$ is a pseudo-differential operator, too. 
In particular, both $\Phi_{q,\mu}$ and $\Psi_{q,\mu}$ are regular and continuable, see, for instance  
\cite[\S 1.5, pp. 78--82]{Tark36}. 
\end{proof}

These considerations result in a homotopy formula that is usually called the (second) Green 
formula for the operator $S_{q,\mu}$, see \cite[Theorem 2.4.8]{Tark35}. 

\begin{prop} \label{t.Green}
Let $D$ be a relatively compact Lipschitz domain in $X$ 
and $ \Delta _{j,\mu} = 0$ for all $j_0-1\leq j \leq q-1$. 
If the self-adjoint matrices ${\mathcal M}_j, \tilde {\mathcal M}_j$, $j_0\leq j\leq q$, are positive 
and $C^\infty$-smooth on $\overline X$, then
for any   $u \in  {\mathcal S} _{S_{q,\mu}} (D) \cap 
\Big(\big(\oplus_{j=j_0}^q C^1 (\overline D, \Lambda^j)\big) \oplus 
\big(\oplus_{j=0}^{j_0-1} C (\overline D, \Lambda^j)\big) \Big)$ 
we have 
\begin{equation}\label{eq.Green.Sqmu.2}
 - \int_{\partial D}
 {\mathcal G}_{S_{q,\mu}} ((\Psi^{(l)}_{q,\mu} (x,y))^*, u (y))  = 
\left\{ 
\begin{array}{lll}
u (x), & x \in D , \\
0, & x \not \in D.
\end{array}
\right.  
\end{equation}
\end{prop}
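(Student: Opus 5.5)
The plan is to apply the first Green formula \eqref{eq.Green.Sqmu1} for $S_{q,\mu}$ on a shrunken domain. Fix $x$ and take $v$ to be the columns of the kernel $(\Psi^{(l)}_{q,\mu}(x,\cdot))^*$. By Lemma \ref{l.3} and the bilateral construction in the proof of Theorem \ref{t.40}, we have $\Psi^{(l)}_{q,\mu} S_{q,\mu}=I$ on ${\mathbf C}^\infty_{0,q}(X)$. In kernel language, $y\mapsto (\Psi^{(l)}_{q,\mu}(x,y))^*$ satisfies $S_{q,\mu,y}(\Psi^{(l)}_{q,\mu}(x,y))^*=\delta_x I$. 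Since $S_{q,\mu}$ is formally self-adjoint, this is the transpose statement of $\Psi^{(l)}_{q,\mu}S_{q,\mu}=I$.

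\emph{Step 1: smooth $u$ on $\overline D$.} Suppose first that $u\in{\mathbf C}^\infty_q(\overline D)$ and $S_{q,\mu}u=0$. For $x\notin\overline D$ the kernel is smooth in $y$ on $\overline D$ and solves $S_{q,\mu}v=0$ there. Formula \eqref{eq.Green.Sqmu1} then gives a boundary integral equal to $0$. For $x\in D$ we cut out a small ball $B(x,\varepsilon)$ and apply \eqref{eq.Green.Sqmu1} on $D\setminus\overline{B(x,\varepsilon)}$. Letting $\varepsilon\to0$, the sphere integral tends to $-u(x)$, with sign conventions as in \eqref{eq.GS}.

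Cleaner than the limit, I would argue distributionally. Take $\phi\in{\mathbf C}^\infty_{0,q}(X)$ and write $\chi_D u$ for the extension of $u$ by zero. Since $S_{q,\mu}u=0$ in $D$, the distribution $S_{q,\mu}(\chi_D u)$ is a single layer/double layer supported on $\partial D$, whose action is given by the Green operator ${\mathcal G}_{S_{q,\mu}}$ in \eqref{eq.GS}. Applying $\Psi^{(l)}_{q,\mu}$ and using $\Psi^{(l)}_{q,\mu}S_{q,\mu}=I$ on compactly supported distributions yields \eqref{eq.Green.Sqmu.2} immediately. Here $D\Subset X$ ensures compact support in $X$, and $\Psi^{(l)}$ is properly supported via $\chi$.

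\emph{Step 2: reduced regularity.} The general $u$ is only $C^1$ up to $\partial D$ in the components $u_j$, $j\ge j_0$, and only $C$ in the components $u_j$, $j<j_0$. This matches the order structure of ${\mathcal G}_{S_{q,\mu}}$. By \eqref{eq.GS} and \eqref{eq.GDelta}, first derivatives appear only in the blocks $G_{\Delta_{j,\mu}}$ with $j\ge j_0$, because $\Delta_{j,\mu}=0$ for smaller $j$. The terms $u_j\wedge\star v_{j+1}$ are of order zero. Hence the boundary integral makes sense for such $u$.

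To pass to this class, I would exhaust $D$ by smooth domains $D_k\uparrow D$. By Proposition \ref{p.0}, $u$ is smooth inside $D$, so Step 1 applies on each $D_k$. Then I let $k\to\infty$, using continuity of $u$, of $d u_j$ ($j\ge j_0$) on $\overline D$, and of the kernel away from $x$.

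\emph{Main obstacle.} The delicate step is this limit on Lipschitz domains. One needs approximating domains $D_k$ whose boundaries converge to $\partial D$ with uniformly bounded Lipschitz character and converging normals a.e., for example via a transversal vector field as in Ne\v{c}as' construction. One also needs to verify that the precise form of the hypothesis $\Delta_{j,\mu}=0$ for $j_0-1\le j\le q-1$ kills every derivative term on the low-degree components. Everything else follows directly from the fundamental-solution identity of Lemma \ref{l.3}/Theorem \ref{t.40}.
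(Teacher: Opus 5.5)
Your proposal is correct and is essentially the paper's argument. The paper's proof is only a citation of Tarkhanov's general second Green formula \cite[Theorem 2.4.8]{Tark35}, combined with the first Green formula \eqref{eq.Green.Sqmu1} and the order structure of the Green operators \eqref{eq.GDelta}, \eqref{eq.GS}; your distributional Step~1 and the exhaustion in Step~2 are the standard argument behind that citation, specialized to the orders appearing in ${\mathcal G}_{S_{q,\mu}}$. Your reading of the hypothesis as $\Delta_{j,\mu}=0$ for all $0\le j\le j_0-1$ (as in Proposition~\ref{p.0}) is the right one, since the literal range $j_0-1\le j\le q-1$ contradicts positivity of ${\mathcal M}_j$ for $j_0\le j\le q-1$, and the vanishing in \eqref{eq.Green.Sqmu.2} should be read for $x\notin\overline D$, which is exactly what your argument yields.
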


\begin{proof} Follows immediately from \cite[Theorem 2.4.8]{Tark35}, the (first) Green formula \eqref{eq.Green.Sqmu1} 
for $S_{q,\mu}$ and formulae \eqref{eq.Green.d}, \eqref{eq.GDelta}, \eqref{eq.GS} for Green's operators 
${\mathcal G}_{d_{j}}$, ${\mathcal G}_{\Delta_{j,\mu}}$, ${\mathcal G}_{S_{q,\mu}}$, 
 related to operators $d_j$, $\Delta_{j,\mu}$ and $S_{q,\mu}$, respectively.  
\end{proof}


{\bf Acknowledgements}. 
This work was supported by the Krasnoyarsk Mathematical Center and financed by the Ministry of Science and 
Higher Education of the Russian Federation (Agreement No. 075-02-2026-1314).


\begin{thebibliography}{XXXXXX}



\bibitem{deRham}
de Rham, G.  Differentiable manifolds. Forms, currents, harmonic forms. Grundlehren der mathematischen Wissenschaften. Vol. 266, 1984.
















\bibitem{Mal56}
{Malgrange, B.}  
\textit{Existence et approximation des solutions des \'equations aux d\'eriv\'ees 
partielles et des \'equations de convolution}, Annales de l'Institut Fourier, V. 6 
(1955/56),  271--355. 

\bibitem{Mal63}
{Malgrange, B.}  
\textit{Sur les syst\'emes differentiels \'a coefficients constants}, Colloq. internat. 
Centre nat. rech. scient., 117 (1963),  113--122. 
 

\bibitem{Mas97}
Maslennikova, V.N.  Partial Differential Equations, M., RUDN,  1997.

\bibitem{MeShT}
Mera A., Shlapunov A.A.,  Tarkhanov N. 
\textit{ Navier-Stokes Equations for 
Elliptic Complexes}, Journal of Siberian Federal University, Math. and Phys., {\bf 12}:1
(2019),  3--27.



\bibitem{Oz}
Oseen, C. W. \textit{\"Uber die Stokes'sche formel, und \"uber eine verwandte Aufgabe in der 
Hydrodynamik}, Arkiv f\"or matematik, astronomi och fysik, VI (29), 1910.


\bibitem{PaSh21}
Parfenov, A.A., Shlapunov, A.A.  \textit{On the stability phenomenon of the Navier-Stokes type 
Equations for Elliptic Complexes}, Complex Variables and Elliptic Equations, N. 6--7, 
 Volume 66 (2021), 1122--1150.








\bibitem{SShTa}
 Schulze B.-W.,  Shlapunov A.A., Tarkhanov N.  
\textit{ Green integrals on manifolds with cracks.}
Annals of Global Analysis and Geometry, Vol. 24, 2003, p. 131--160.


\bibitem{ShTaNS}
Shlapunov A.A., Tarkhanov N.  
An open mapping theorem for the Navier-Stokes type equations associated with the de 
Rham complex over ${\mathbb R}^n$, Siberian Electronic Math. Reports, 18:2 (2021), 1433--1466.



\bibitem{ShPMi}
Shlapunov A.A., Polkovnikov A.N., Mironov  V.L. \textit{ 
Maxwell's and Stokes's operators associated with elliptic differential complexes.} 
J. Math. Phys. 66, 011513 (2025). 

\bibitem{ShPo}
Shlapunov A. A., Polkovnikov, A. N.  \textit{ Generalized Navier--Stokes equations
associated with the Dolbeault complex.} 
J. of Math. Sciences, Vol. 293:3 (2025), 430-439. 

\bibitem{Tark35}
{Tarkhanov, N.} 
 {Complexes of differential operators},
 Kluwer Academic Publishers, Dordrecht, NL, 1995.

\bibitem{Tark36}
{Tarkhanov, N.} 
{The Cauchy Problem for Solutions of Elliptic Equations},
 Akademie-Verlag, Berlin, 1995.
\cite[\S 6.3.4]{Tark36}

\bibitem{Tark37}
{Tarkhanov, N.} 
 {The Analysis of Solutions of Elliptic Equations},
Kluwer Academic Publishers, Dordrecht, NL, 1997.

\bibitem{Tema79}
Temam, R.  
{Navier-Stokes Equations. Theory and Numerical Analysis},
  North Holland Publ. Comp., Amsterdam, 1979.
	
\bibitem{Vla} Vladimirov, V.S. Equations Of Mathematical Physics, Mir Publ., Moscow, 1984.




 

\bibitem{WRL}  Wloka, J. T., Rowley, B., and Lawruk,  B. Boundary Value Problems for Elliptic Systems,
Cambridge Univ. Press, Cambridge, 1995. 






\end{thebibliography}
\end{document}